\renewcommand{\epsilon}{\varepsilon}
\newtheorem{theorem}{Theorem}[section]
\newtheorem{lemma}[theorem]{Lemma}
\theoremstyle{definition}
\newtheorem{definition}[theorem]{Definition}
\newtheorem{remark}[theorem]{Remark}
\numberwithin{equation}{section}
\title[Regularity of solutions and supersolutions]{Regularity of weak solutions and supersolutions to the porous medium equation}
\author[V. B\"ogelein]{Verena B\"{o}gelein}
\address{Verena B\"ogelein\\
Fachbereich Mathematik, Universit\"at Salzburg\\
Hellbrunner Str. 34, 5020 Salzburg, Austria}
\email{verena.boegelein@sbg.ac.at}
\author[P. Lehtel\"a]{Pekka Lehtel\"a}
\address{Pekka Lehtel\"a\\
Aalto University, Department of Mathematics and Systems Analysis\\
P.O. Box 11100, FI-00076 Aalto, Finland}
\email{pekka.lehtela@aalto.fi}
\author[S. Sturm]{Stefan Sturm}
\address{Stefan Sturm\\
Fachbereich Mathematik, Universit\"at Salzburg\\
Hellbrunner Str. 34, 5020 Salzburg, Austria}
\email{stefan.sturm@sbg.ac.at}
\thanks{The research is partially supported by the Emil Aaltonen Foundation.}
\newcommand\rn{\mathbb R^n}
\newcommand\re{\mathbb R}
\newcommand\n{\mathbb N}
\newcommand\bd{\partial}
\newcommand\ph{\varphi}
\newcommand\eps{\varepsilon}
\renewcommand\div{\operatorname{div}}
\newcommand\diam{\operatorname{diam}}
\newcommand\dist{\operatorname{dist}}
\DeclareMathOperator*{\esssup}{ess\,sup}
\newcommand\dx {\, d}
\DeclareRobustCommand{\rchi}{{\mathpalette\irchi\relax}} 
\newcommand{\irchi}[2]{\raisebox{\depth}{$#1\chi$}} 
\providecommand{\ch}[1]{\text{\raise 2pt \hbox{$\chi$}\kern-0.2pt}_{#1}}
\providecommand{\vint}[1]{\mathchoice
          {\mathop{\vrule width 5pt height 3 pt depth -2.5pt
                  \kern -9pt \kern 1pt\intop}\nolimits_{\kern -5pt{#1}}}%
          {\mathop{\vrule width 5pt height 3 pt depth -2.6pt
                  \kern -6pt \intop}\nolimits_{\kern -3pt{#1}}}%
          {\mathop{\vrule width 5pt height 3 pt depth -2.6pt
                  \kern -6pt \intop}\nolimits_{\kern -3pt{#1}}}%
          {\mathop{\vrule width 5pt height 3 pt depth -2.6pt
                  \kern -6pt \intop}\nolimits_{\kern -3pt{#1}}}}
\begin{document}

\begin{abstract}
We study the relations between different regularity assumptions in the definition of weak solutions and supersolutions to the porous medium equation. In particular, we establish the equivalence of the conditions $u^m \in L^2_\textup{loc}(0,T;H^1_\textup{loc}(\Omega))$ and $u^\frac{m+1}2 \in L^2_\textup{loc}(0,T;H^1_\textup{loc}(\Omega))$ in the definition of weak solutions. Our proof is based on approximation by solutions to obstacle problems.
\end{abstract}

\subjclass[2010]{Primary 35D10, Secondary 35K65, 35K10}

\keywords{Porous medium equation, weak solutions, weak supersolutions, gradient estimates, obstacle problem}

\date{\today}  
\maketitle

\section{Introduction}
In this paper, we study the connections between various notions of non-negative (super)solutions to the slow diffusion porous medium equation
\begin{equation}
  \label{eq:PME}
  \partial_t u-\Delta u^m= 0 \quad \text{in } \Omega_T,
\end{equation}
where $m>1$ and $\Omega_T= \Omega\times (0,T)$ with $T>0$ denotes the space-time cylinder over a bounded domain $\Omega\subset \rn$. Particularly, concerning the notion of weak solution in the case $m>1$, there are basically two different definitions used in the literature. The difference becomes apparent in the regularity assumptions utilized in the definition of weak solutions. 
The first one, which is used for instance in \cite{BDKS, DiBenedetto_Holder, supersol, vazquez}, acts on the assumptions that  
\begin{equation*}
	u\in C^0\big((0,T);L^{m+1}_{\rm loc}(\Omega)\big)
	\quad\text{and}\quad
	u^m \in L^2_{\textup{loc}}\big(0,T; H^1_{\textup{loc}}(\Omega)\big),
\end{equation*}
whereas the requirements in the other definition of weak solutions, used for instance in \cite{PME_measuredata, DBGV-Acta, dibenedetto_harnack, gianazza_schwarzacher}, are
\begin{equation*}
	u\in C^0\big((0,T);L^{2}_{\rm loc}(\Omega)\big)
	\quad\text{and}\quad
	u^{\frac{m+1}{2}} \in L^2_{\textup{loc}}\big(0,T; H^1_{\textup{loc}}(\Omega)\big). 
\end{equation*}
As the relations between these conditions have, at least to the authors' knowledge, not been treated up to now, we aim to clarify this matter by establishing the equivalence of these two conditions in the definition of weak solutions; see Theorem~\ref{equivalence_celebrated} below. 

As a tool, which is interesting also in its own right, we consider a class of supersolutions that are defined analogously to supercaloric functions in classical potential theory, i.\,e.\ in terms of the parabolic comparison principle. Originally, this class was introduced as ``viscosity supersolutions'' in \cite{supersol}, and recently, the label ``$m$-superporous functions'' was suggested in \cite{kinnunnen_lindqvist_unbounded, lehtela_lukkari}. As it seems to be more natural, we will call them {\it $m$-supercaloric functions} (see Def.\ \ref{def:supercaloric}). Apart from that, another notion is the one of {\it weak supersolutions} (see Def.\ \ref{def:supersolutions}), which are defined via the weak formulation of \eqref{eq:PME}. While $m$-supercaloric functions typically appear in potential theory, the weak formulation provides a natural approach to regularity questions. 

In \cite{supersol}, it was proved that these two notions are related in the sense that bounded $m$-supercaloric functions are also weak supersolutions to the porous medium equation. According to that result, $m$-supercaloric functions can be studied via approximation by their truncations, which are weak supersolutions. However, the defintion of weak supersolutions used for example in \cite{ivanov, supersol} does not allow to work with the solution itself as a test function. Instead, only $u^m$ is an admissible choice. Therefore, some additional assumptions are needed when testing the equation with $u$. A sufficient condition, that is commonly imposed (see \cite{PME_measuredata, DBGV-Acta, dibenedetto_harnack, gianazza_schwarzacher}), is given by the integrability property \eqref{integrab_cond}. 

Our first theorem shows that this property is satisfied for bounded $m$-supercaloric functions. In a sense, it complements the work of \cite{supersol} as it allows to study the regularity of $m$-supercaloric functions by applying the results for weak supersolutions which were established under the assumption \eqref{integrab_cond}. 

\begin{theorem} \label{thm:equivalence}
  Let $u$ be a locally bounded $m$-supercaloric function. Then, $u$ is a weak supersolution to the porous medium equation \eqref{eq:PME} and $u$ satisfies
\begin{align} \label{integrab_cond}
u^{\frac{m+1}2} \in L^2_\textup{loc}\big(0,T;H^1_\textup{loc}(\Omega)\big).
\end{align}
\end{theorem}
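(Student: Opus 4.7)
My plan is to prove \eqref{integrab_cond} by approximating $u$ locally from below by solutions to suitable obstacle problems for the porous medium equation. Once \eqref{integrab_cond} is known, the statement that $u$ is a weak supersolution in the sense of \cite{PME_measuredata, DBGV-Acta, dibenedetto_harnack, gianazza_schwarzacher} follows by combining \eqref{integrab_cond} with the result of \cite{supersol}, which already provides $u^m \in L^2_{\mathrm{loc}}(0,T;H^1_{\mathrm{loc}}(\Omega))$ together with the weak supersolution inequality in which $u^m$ plays the role of the tested quantity.

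First I would fix a subcylinder $Q := K \times I \Subset Q' \Subset \Omega_T$ and set $M := \|u\|_{L^\infty(Q')}<\infty$. For each $\varepsilon>0$, I would construct a time-regularized obstacle $\psi_\varepsilon$ on $Q'$ with $\psi_\varepsilon\le u$ and $\psi_\varepsilon\to u$ a.e., for instance via an inf-convolution or a Steklov average from below. Then I would consider on $Q'$ the obstacle problem for \eqref{eq:PME} with obstacle $\psi_\varepsilon$ and parabolic boundary values equal to $u$, and call its solution $v_\varepsilon$. By minimality of the obstacle solution combined with the fact that $u$ itself is an admissible supersolutional competitor, one obtains the squeeze $\psi_\varepsilon \le v_\varepsilon \le u$ on $Q'$, while $v_\varepsilon$ is a weak solution of \eqref{eq:PME} outside the contact set $\{v_\varepsilon=\psi_\varepsilon\}$.

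The decisive feature is that, thanks to the obstacle constraint and the time-regularity of $\psi_\varepsilon$, the variational inequality for $v_\varepsilon$ can be tested with (a Lipschitz cutoff of) $v_\varepsilon$ itself, producing the standard porous medium energy identity in which $|\nabla v_\varepsilon^{(m+1)/2}|^2$ appears on the left-hand side. This yields a bound on $\|\nabla v_\varepsilon^{(m+1)/2}\|_{L^2(Q)}$ depending only on $M$, on the geometry of $Q\Subset Q'$, and on $\|\nabla u^m\|_{L^2(Q')}$, and crucially uniform in $\varepsilon$. Combined with the a.e.\ convergence $v_\varepsilon\to u$ forced by the squeeze, a standard weak-compactness and lower-semicontinuity argument identifies the weak $L^2$-limit of $\nabla v_\varepsilon^{(m+1)/2}$ as $\nabla u^{(m+1)/2}$ on $Q$, whence \eqref{integrab_cond} follows by the arbitrariness of $Q$.

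The main obstacle I expect is the construction and analysis of the obstacle problem itself. Since $u$ is a priori only lower semicontinuous in time, a naive mollification does not produce a $\psi_\varepsilon\le u$ with sufficient time-regularity to legitimate testing with $v_\varepsilon$, so the choice of regularization (and the simultaneous prescription of the parabolic boundary values as $u$, which is essential to force $v_\varepsilon\le u$) is the technical heart of the argument. Once that is in place, the energy identity for the obstacle-problem solution and the standard compactness machinery for the porous medium equation deliver the conclusion.
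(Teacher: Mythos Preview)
Your overall strategy --- approximate the $m$-supercaloric function from below by solutions to obstacle problems, derive uniform $L^2$-bounds on $\nabla v_\varepsilon^{(m+1)/2}$, and pass to the limit by weak compactness --- is exactly the approach taken in the paper. However, two of your key steps have genuine gaps that the paper spends most of its length addressing.

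\textbf{Boundary data and the squeeze $v_\varepsilon\le u$.} You propose to prescribe $u$ itself as parabolic boundary data for the obstacle problem and then invoke ``minimality'' to conclude $v_\varepsilon\le u$. But $u$ is only lower semicontinuous and locally bounded; the existence theory for the obstacle problem requires $g^m\in L^2(0,T;H^1)$ and $\partial_t g^m\in L^{(m+1)/m}$, which you do not have for $g=u$. The paper sidesteps this entirely: it takes an \emph{increasing} sequence of smooth $\psi_i\nearrow u$ and uses $\psi_i$ as \emph{both} obstacle and boundary data. The inequality $u_i\le u$ then cannot come from a minimality principle (the boundary data are not $u$); instead the paper uses that $u_i$ is a genuine weak solution on the open set $\{u_i>\psi_i\}$, checks that $u_i\le\psi_{i+1}<u$ on the parabolic boundary of that set, and invokes the comparison clause in the \emph{definition} of $m$-supercaloric functions. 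This is the argument you are missing.

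\textbf{Testing with $v_\varepsilon$ and the gradient bound.} You write that ``the variational inequality for $v_\varepsilon$ can be tested with $v_\varepsilon$ itself, producing \ldots\ $|\nabla v_\varepsilon^{(m+1)/2}|^2$ on the left-hand side.'' This is circular: to make sense of that test you already need $v_\varepsilon^{(m+1)/2}\in L^2_{\mathrm{loc}}(0,T;H^1_{\mathrm{loc}})$, which is precisely the regularity you are trying to establish (cf.\ the hypothesis in Lemma~\ref{caccioppoli}). The paper's main technical contribution is to \emph{construct} obstacle solutions that carry this regularity, via a two-level approximation: first solve a penalized porous medium equation whose solutions lie in $H^1_{\mathrm{loc}}$ by classical nondegenerate theory (Section~\ref{sec:strong}), derive the $\nabla u^{(m+1)/2}$ bound there, and then pass to the limit twice (penalization parameter $\delta\to0$, then obstacle regularization) while preserving the bound by lower semicontinuity. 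Only \emph{after} this construction is Lemma~\ref{caccioppoli} applicable to the obstacle solutions, yielding the uniform estimate depending only on $M$ and geometry (not on $\|\nabla u^m\|_{L^2(Q')}$ as you state).

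In short, your outline is correct in spirit but treats as routine the two points --- admissible boundary data plus the resulting comparison argument, and the a~priori existence of $\nabla v_\varepsilon^{(m+1)/2}$ --- that constitute the actual work.
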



The novelty of this paper is proving the existence of the gradient $\nabla u^\frac{m+1}{2}$ in $L^2_\textup{loc}(\Omega_T,\rn)$ for locally bounded $m$-supercaloric functions $u$. Moreover, we rigorously establish all the tools concerning solutions to the obstacle problem used throughout the proof, like the assertion that weak solutions to the obstacle problem are weak solutions to the porous medium equation in the complement of the coincidence set (see Lemma~\ref{weak_sol_exist_lemma}). 
This closes a gap in the literature, since such assertions were commonly recognized to be true although to our knowledge, a rigorous proof was missing. 

From the above theorem, we can also infer the equivalence of locally bounded $m$-supercaloric functions and weak supersolutions satisfying \eqref{integrab_cond} in the following way. First, our theorem shows that locally bounded $m$-supercaloric functions are weak supersolutions fulfilling the conditions $u^m \in L^2_{\textup{loc}}(0,T; H^1_{\textup{loc}}(\Omega))$ and $u^{\frac{m+1}2} \in L^2_{\textup{loc}}(0,T;H^1_{\textup{loc}}(\Omega))$. On the other hand, the fact that weak supersolutions are $m$-supercaloric functions follows directly from the comparison principle (see \cite[Thm.\ 6.5]{vazquez}) and the existence of lower semicontinuous representatives (see \cite[Thm.\ 1.1]{lower_semicontinuity}). 

The analogue result for the evolutionary $p$-Laplace equation was established in \cite{kinnunen_lindqvist_reg}, where the counterpart of $m$-supercaloric functions are $p$-superparabolic functions. In other words, there was shown that bounded $p$-superparabolic functions are weak supersolutions to the evolutionary $p$-Laplace equation, and the other implication follows in the same manner as for the porous medium equation. However, despite the similarities of both equations, the necessity of a condition like \eqref{integrab_cond} is typical for the porous medium equation whereas such a phenomenon does not appear in the theory of the $p$-Laplace equation.

Coming back to the porous medium equation, similar challenges as for weak supersolutions also arise when trying to define a suitable notion of weak solution in order to establish their regularity. However, as weak solutions possess a continuous representative by \cite[Thm.\ 1.1]{dahlberg_kenig}, we can assume that they are locally bounded (see also \cite{bound_andreucci} for an explicit estimate), and since also the comparison principle is at hand, Theorem~\ref{thm:equivalence} is applicable. This shows that, for weak solutions, $u^m \in L^2_\textup{loc}(0,T;H^1_\textup{loc}(\Omega))$ implies $u^\frac{m+1}2 \in L^2_\textup{loc}(0,T;H^1_\textup{loc}(\Omega))$. On the one hand, this result can be seen as a direct consequence of Theorem~\ref{thm:equivalence}. On the other hand, it can be proved directly, without employing the theory of $m$-supercaloric functions. Since also this viewpoint is interesting -- in particular for more general porous medium type equations -- we provide this alternative proof at the end of Section~\ref{sec:proof}. In addition, we will even prove the converse statement, which ensures that we will get the following equivalence of the two notions of weak solutions to the porous medium equation mentioned above.

\begin{theorem} \label{equivalence_celebrated}
For any function $u\colon \Omega_T\to [0,\infty]$, the following statements are equivalent:
\begin{enumerate}
\item[(i)] $u\in C^0((0,T);L^{m+1}_{\rm loc}(\Omega))$ and $u^m \in L^2_\textup{loc}(0,T;H^1_\textup{loc}(\Omega))$ and
	\begin{align}\label{weak-eq}
		\iint_{\Omega_T} \Big(-u\partial_t\ph + \nabla u^m \cdot \nabla \ph \Big) \, dx \, dt = 0	
	\end{align}
	for any test function $\ph\in C_0^\infty(\Omega_T)$;\vspace{.1cm}
\item[(ii)] $u\in C^0((0,T);L^{2}_{\rm loc}(\Omega))$ and $u^\frac{m+1}{2} \in L^2_\textup{loc}(0,T;H^1_\textup{loc}(\Omega))$ and
	\begin{align}\label{weak-eq-it}
	   \iint_{\Omega_T} \Big(-u\partial_t\ph + \tfrac{2m}{m+1} u^\frac{m-1}{2} \nabla u^\frac{m+1}{2} \cdot \nabla \ph \Big) \, dx \, dt = 0
	\end{align}
	for any test function $\ph\in C_0^\infty(\Omega_T)$.
\end{enumerate}
\end{theorem}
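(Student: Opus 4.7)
The two weak formulations are formally equivalent via the chain-rule identity
\[
  \nabla u^m \;=\; \tfrac{2m}{m+1}\, u^{\frac{m-1}{2}}\, \nabla u^{\frac{m+1}{2}},
\]
and, provided $u$ is locally bounded, the two continuity conditions are also equivalent via the interpolation bounds $\|f\|_{L^2(K)}^2 \le |K|^{(m-1)/(m+1)} \|f\|_{L^{m+1}(K)}^2$ and $\|f\|_{L^{m+1}(K)}^{m+1} \le \|f\|_{L^\infty(K)}^{m-1} \|f\|_{L^2(K)}^2$. Hence the substance of the theorem reduces to (a) establishing local boundedness of $u$ under either set of hypotheses and (b) the equivalence of the two Sobolev gradient conditions once $u$ is locally bounded.

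For the direction (i)$\,\Rightarrow\,$(ii), my plan is first to invoke the Dahlberg--Kenig continuity result to replace $u$ by its locally bounded continuous representative. Two routes then lead to the gradient condition in (ii). The quick one: such a $u$ is $m$-supercaloric by the comparison principle, so Theorem~\ref{thm:equivalence} yields $u^{(m+1)/2}\in L^2_{\mathrm{loc}}(0,T;H^1_{\mathrm{loc}}(\Omega))$. The direct one, promised at the end of Section~\ref{sec:proof}: Steklov-average the equation in time, test with $\xi^2 u$ for a spacetime cutoff $\xi$, integrate by parts, and pass to the limit. Up to boundary contributions controlled by $\|u\|_\infty \|\nabla u^m\|_{L^2}$ and $\iint u^2|\partial_t(\xi^2)|$, the elliptic term produces $\iint \nabla u^m \cdot \nabla u\,\xi^2$, which I would convert into $\tfrac{4m}{(m+1)^2}\iint|\nabla u^{(m+1)/2}|^2 \xi^2$ by first working with $u_\varepsilon := \max\{u,\varepsilon\}$, where the classical chain rule applies to a positive bounded-below Sobolev function, and then letting $\varepsilon \downarrow 0$ via monotone convergence. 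Continuity $u\in C^0((0,T);L^2_{\mathrm{loc}})$ comes from the $L^\infty$ bound and the hypothesised continuity in $L^{m+1}_{\mathrm{loc}}$.

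For the converse (ii)$\,\Rightarrow\,$(i), I would first establish local boundedness of $u$ by the standard De~Giorgi / Moser iteration for the form~\eqref{weak-eq-it} (this argument requires only $\nabla u^{(m+1)/2}\in L^2_{\mathrm{loc}}$ as input; compare DiBenedetto). Once $u$ is locally bounded, set $v:=u^{(m+1)/2}\in H^1_{\mathrm{loc}}$; since $2m/(m+1)>1$ the map $s\mapsto s^{2m/(m+1)}$ is $C^1$ and Lipschitz on $[0,\|v\|_\infty]$, so the Sobolev chain rule gives $u^m = v^{2m/(m+1)}\in H^1_{\mathrm{loc}}$ together with the identity $\nabla u^m = \tfrac{2m}{m+1} u^{(m-1)/2}\nabla u^{(m+1)/2}$. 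Substituting this identity into~\eqref{weak-eq-it} yields~\eqref{weak-eq}, and interpolation between $L^2_{\mathrm{loc}}$ and $L^\infty_{\mathrm{loc}}$ upgrades the continuity to $C^0((0,T);L^{m+1}_{\mathrm{loc}}(\Omega))$.

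The main obstacle sits squarely in the gradient step of (i)$\,\Rightarrow\,$(ii). Because the exponent $(m+1)/(2m)$ is strictly less than one, the map $s\mapsto s^{(m+1)/(2m)}$ is only H\"older continuous at the origin, so $u^m\in H^1_{\mathrm{loc}}$ does not yield $u^{(m+1)/2}\in H^1_{\mathrm{loc}}$ by any purely algebraic chain rule -- the PDE itself must be used. The technical heart is that $u$ is not an admissible test function in~\eqref{weak-eq}, so a Steklov/mollification approximation combined with the lower truncation $\max\{u,\varepsilon\}$ and a spacetime cutoff is required to make the energy identity rigorous before one can pass to the limit; alternatively one simply invokes Theorem~\ref{thm:equivalence}, which packages exactly this argument through the obstacle-problem machinery of the present paper.
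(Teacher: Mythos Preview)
Your proposal is correct and matches the paper's strategy closely. For (i)$\Rightarrow$(ii) the paper gives exactly your two routes: the quick one via Theorem~\ref{thm:equivalence}, and the direct energy argument testing the time-mollified equation with $\zeta^2\max\{u,\varepsilon\}$ (your $\xi^2 u_\varepsilon$; note that $\xi^2 u$ itself is not admissible since only $u^m\in H^1_{\rm loc}$ is assumed, but you clearly intend the truncated version). For (ii)$\Rightarrow$(i) the paper likewise uses local boundedness plus the Lipschitz chain rule for $s\mapsto s^{2m/(m+1)}$. The only deviations are cosmetic: the paper invokes Dahlberg--Kenig rather than De~Giorgi iteration for local boundedness under (ii), and it derives $C^0((0,T);L^{m+1}_{\rm loc})$ from the weak formulation via a time-mollification argument (citing \cite[Lemma~5.2]{obstacle}) rather than from your simpler $L^2$--$L^\infty$ interpolation, which works just as well once $u$ is locally bounded.
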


\begin{remark}
The assertion (i) provides the standard weak formulation coming from multiplication by a test function and integration by parts in \eqref{eq:PME}. By contrast, under the assumption $u^\frac{m+1}{2} \in L^2_\textup{loc}(0,T;H^1_\textup{loc}(\Omega))$, the weak formulation of \eqref{eq:PME} can be defined as in (ii) if we understand the gradient $\nabla u^{\frac{m+1}{2}}$ in the sense
\begin{align*}
\nabla u^\frac{m+1}{2} = \tfrac{m+1}{2m} \rchi_{\{u>0\}} u^\frac{1-m}{2} \nabla u^m.
\end{align*}
Note that this interpretation is in accordance with the definition of weak solutions given, for instance, in \cite[Def.\ 1.1]{PME_measuredata} and \cite[eq.\ (5.7)]{ivanov}.
\end{remark}

To conclude the introduction, we will present the basic ideas of the proof of Theorem \ref{thm:equivalence}, which will imply Theorem \ref{equivalence_celebrated} as described. Since an $m$-supercaloric function $u$ is lower semicontinuous, it can be approximated by an increasing sequence of smooth functions $\psi_i$. The idea is to consider solutions $u_i$ to the obstacle problem with $\psi_i$ as an obstacle. Then, it can be shown that $u_i$ are weak supersolutions and $u_i\rightarrow u$ in a suitable sense. Our contribution here is providing uniform $L_\textup{loc}^2(\Omega_T,\rn)$-estimates for the gradients $\nabla u_i^{\frac{m+1}2}$ of the weak solutions to the obstacle problem, which ensure that $\nabla u^{\frac{m+1}2}\in L^2_{\rm loc}(\Omega_T,\rn)$ after passing to the limit $i\to\infty$. Moreover, as mentioned before, we provide rigorous proofs for some tools on obstacle problems which were commonly recognized to be true, like the assertion on the complement of the coincidence set in Lemma~\ref{weak_sol_exist_lemma}. In a certain sense, this complements the work of \cite{obstacle}. More precisely, in Sections 3 and 4, we modify their approach of constructing weak solutions to the obstacle problem by establishing the estimates for $\nabla u_i^\frac{m+1}{2}$ which were not taken into account in \cite{obstacle}.  
Finally, via the described estimates and weak compactness, we can conclude that \eqref{integrab_cond} is valid.

\section{Preliminaries}\label{sec:pre}

To start with, we fix some notations. We let $\Omega$ be a bounded domain in $\rn$ and denote by $\Omega_T:=\Omega\times(0,T)$ the space-time cylinder of height $T>0$ over $\Omega$. For $U\Subset\Omega$ and $0<t_1<t_2<T$, we abbreviate the cylinder $U\times (t_1,t_2)$ by $U_{t_1,t_2}$ and its parabolic boundary by $\bd_p U_{t_1,t_2} := \big(\bd U\times (t_1,t_2)\big) \cup \big(\overline U \times \{t_1\}\big)$. For an open ball of radius $\varrho>0$ centered at $x_0\in\Omega$, we write $B(x_0,\varrho)$. Moreover, we denote the positive and negative parts of a function $u$ by $u_+ := \max\{u,0\}$ and $u_- := \max\{-u,0\}$, respectively, and, $C$ stands for a constant, which may vary from line to line.

Next, we define our notion of weak (super)solutions.

\begin{definition} \label{def:supersolutions}
  A non-negative $u\in C^0((0,T);L^{m+1}_{\rm loc}(\Omega))$ is a {\it weak supersolution} to the porous medium equation \eqref{eq:PME} if $u^m \in L^2_{\textup{loc}}(0,T; H^1_{\textup{loc}}(\Omega))$ and $u$ satisfies
\begin{equation}\label{weak-super}
\iint_{\Omega_T} \Big(-u\partial_t\ph + \nabla u^m \cdot \nabla \ph \Big) \, dx \, dt \ge 0
\end{equation}
for any test function $\ph \in C_0^\infty (\Omega_T)$ with $\ph \ge 0$. Similarly, $u$ is a {\it weak subsolution} if the above inequality holds reversed. Moreover, $u$ is a {\it weak solution} if it is a weak sub- and supersolution.
\end{definition}

We continue by giving the definition of $m$-supercaloric functions. 

\begin{definition} \label{def:supercaloric}
  A function $u\colon\Omega_T\rightarrow [0,\infty]$ is {\it $m$-supercaloric} if 
  \begin{enumerate}
  \item $u$ is lower semicontinuous,
  \item $u$ is finite in a dense subset of $\Omega_T$, and
  \item $u$ satisfies the following comparison principle in every interior cylinder $U_{t_1,t_2}\Subset \Omega_T$: 
If $w\in C^0(\overline{U_{t_1,t_2}})$ is a weak solution to \eqref{eq:PME} in $U_{t_1,t_2}$ and $u\ge w$ on $\bd_p U_{t_1,t_2}$, then $u\ge w$ in $U_{t_1,t_2}$.
  \end{enumerate}
\end{definition}

By \cite[Thm.\ 1.3]{supersol}, bounded $m$-supercaloric functions are weak supersolutions in the sense of Def.\ \ref{def:supersolutions}.

In order to derive energy estimates for weak (super)solutions to the porous medium equation, we want to use the (super)solution $u$ itself as a test function. However, since the time derivative $\partial_t u$ does not exist in general, we need to regularize $u$ to deal with this matter. To that end, we introduce the mollification
\begin{equation}\label{regularization}
\llbracket u \rrbracket_h(x,t)= e^{-\frac th}v_0 + \frac 1h \int_0^t e^{\frac{s-t}h} u(x,s) \, ds
\end{equation}
for $h>0$ with some $v_0\in L^1(\Omega)$. Then, choosing $v_0\equiv 0$, it can be shown that $\llbracket u \rrbracket_h$ satisfies the regularized inequality (see \cite[eq.\ (6.5)]{obstacle} or \cite[eq.\ (2.12)]{supersol})
\begin{equation} \label{ineq:regularized}
\iint_{\Omega_T} \Big( \bd_t \llbracket u \rrbracket_h  \ph + \nabla \llbracket u^m \rrbracket_h \cdot \nabla \ph \Big)\, dx \,dt \geq  \frac 1 h \int_\Omega u(\cdot,0) \int_0^T \ph e^{-\frac s h } \, ds \, dx
\end{equation}
for any test function $\ph\in L^2(0,T;H_0^1(\Omega))$. Moreover, the formula
\begin{align} \label{magic_formula_mollification}
\partial_t \llbracket u \rrbracket_h = \tfrac 1h \big(u- \llbracket u \rrbracket_h\big)
\end{align}
holds (see \cite[Lemma 3.1]{obstacle}). By imposing the additional regularity condition \eqref{integrab_cond} to a weak supersolution $u$, we can now derive the following Caccioppoli-type estimate.

\begin{lemma}\label{caccioppoli}
  Let $u$ be a weak supersolution to the porous medium equation in the sense of Def.\ \ref{def:supersolutions} which additionally satisfies $u^{\frac {m+1}2} \in L^2_{\textup{loc}}(0,T; H^1_{\textup{loc}}(\Omega))$, and let $\zeta\in C_0^\infty(\Omega_T)$ be a smooth cut-off function such that $0\le \zeta \le 1$. Then, if $u\leq M$ in $\Omega_T$ for some constant $M>0$, the Caccioppoli estimate
  \begin{align*}
    &\iint_{\Omega_T} \zeta^2 \big| \nabla u^\frac{m+1}{2} \big|^2 \dx x \dx t \\
&~~\le C \left ( \iint_{\Omega_T} (M-u)^2 \zeta |\partial_t\zeta| \dx x \dx t + \iint_{\Omega_T} u^{m-1} (M-u)^2 |\nabla \zeta |^2 \dx x \dx t\right )
  \end{align*}
holds with a constant $C= C(m)$.
\end{lemma}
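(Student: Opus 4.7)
The plan is to test the weak supersolution inequality \eqref{weak-super} with the non-negative function $\phi=\zeta^2(M-u)$, turning the diagonal spatial contribution into the Caccioppoli energy via the pointwise identity
\[
\nabla u^m\cdot\nabla u=mu^{m-1}|\nabla u|^2=\tfrac{4m}{(m+1)^2}\big|\nabla u^{\frac{m+1}{2}}\big|^2
\quad\text{on }\{u>0\}.
\]
Formally, $\partial_t\phi=2\zeta\partial_t\zeta(M-u)-\zeta^2\partial_t u$, and the identity $u\,\partial_t u=\tfrac12\partial_t u^2$ combined with integration by parts in time against the compactly supported $\zeta$ rewrites $\iint(-u\partial_t\phi)\,\dx x\dx t$ as $\iint\zeta\partial_t\zeta(M-u)^2\,\dx x\dx t$ (the extra $M^2$-summand vanishes because $\iint\partial_t\zeta^2\,\dx x\dx t=0$). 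Expanding $\nabla u^m\cdot\nabla\phi=2\zeta(M-u)\nabla u^m\cdot\nabla\zeta-\zeta^2\nabla u^m\cdot\nabla u$ therefore gives
\[
\tfrac{4m}{(m+1)^2}\iint\zeta^2\big|\nabla u^{\frac{m+1}{2}}\big|^2\,\dx x\dx t
\le\iint\zeta|\partial_t\zeta|(M-u)^2\,\dx x\dx t
+2\iint\zeta(M-u)|\nabla u^m||\nabla\zeta|\,\dx x\dx t.
\]
Substituting $|\nabla u^m|=\tfrac{2m}{m+1}u^{\frac{m-1}{2}}|\nabla u^{\frac{m+1}{2}}|$ in the cross term and applying Young's inequality with a small parameter $\epsilon>0$ absorbs $\epsilon\iint\zeta^2|\nabla u^{\frac{m+1}{2}}|^2$ into the left-hand side and leaves $C_\epsilon\iint u^{m-1}(M-u)^2|\nabla\zeta|^2\,\dx x\dx t$ on the right, giving the claim with $C=C(m)$.

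To realize this rigorously, since $\partial_t u$ is not available in general, I would replace $u$ by the mollification $\llbracket u\rrbracket_h$ from \eqref{regularization} (with $v_0\equiv 0$) and test the regularized inequality \eqref{ineq:regularized} with $\phi_h=\zeta^2(M-\llbracket u\rrbracket_h)$. The formula \eqref{magic_formula_mollification} together with integration by parts reproduces the time-term computation above as $\iint\zeta\partial_t\zeta(M-\llbracket u\rrbracket_h)^2\,\dx x\dx t$; the bound $\llbracket u\rrbracket_h\le M$ (averaging against a weight of total mass at most $1$) ensures $\phi_h\ge 0$; and the right-hand side of \eqref{ineq:regularized} vanishes as $h\to 0$ because $\zeta$ is supported away from $t=0$ while $e^{-s/h}$ decays exponentially there. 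Standard convergence properties of the mollification then yield $\llbracket u^m\rrbracket_h\to u^m$ in $L^2_{\rm loc}(0,T;H^1_{\rm loc}(\Omega))$ and $\llbracket u\rrbracket_h\to u$ a.e.\ and in $L^{m+1}_{\rm loc}$, which handles all terms apart from the diagonal spatial one.

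The main obstacle is the admissibility of $\phi_h$ as an element of $L^2(0,T;H_0^1(\Omega))$: the hypothesis $u^{\frac{m+1}{2}}\in L^2_{\rm loc}(0,T;H^1_{\rm loc}(\Omega))$ does not transfer to $u$ itself since $t\mapsto t^{2/(m+1)}$ is merely H\"older (not Lipschitz) near the origin, so $\nabla\llbracket u\rrbracket_h$ need not be square integrable a priori. To resolve this, I would first replace $u$ in the test function by the truncation $u_\delta:=\max\{u,\delta\}$ with $\delta>0$: as a Lipschitz composition of $u^{\frac{m+1}{2}}$ bounded away from $0$, $u_\delta\in L^2_{\rm loc}(0,T;H^1_{\rm loc}(\Omega))$ with $\nabla u_\delta=\rchi_{\{u>\delta\}}\nabla u$ and $\nabla u_\delta=\tfrac{2}{m+1}u^{-\frac{m-1}{2}}\nabla u^{\frac{m+1}{2}}$ on $\{u>\delta\}$. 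After running the above argument with the admissible test function $\zeta^2(M-\llbracket u_\delta\rrbracket_h)$, controlling the discrepancy $u_\delta-u=(\delta-u)_+$ by the $L^\infty$ bound on $u$, and finally sending first $h\to 0$ and then $\delta\to 0$ via dominated convergence against $|\nabla u^{\frac{m+1}{2}}|^2\in L^1_{\rm loc}(\Omega_T)$, we recover the stated Caccioppoli estimate.
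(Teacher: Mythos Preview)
Your approach is essentially the same as the paper's: both test with $\zeta^2(M-\max\{u,\delta\})$ after truncating $u$ from below to gain admissibility, and both absorb the cross term via Young's inequality. The only substantive difference is that you further mollify the truncation inside the test function, taking $\phi=\zeta^2(M-\llbracket u_\delta\rrbracket_h)$, whereas the paper keeps the test function unmollified as $\phi=\zeta^2 g_\epsilon(u)=\zeta^2(M-\max\{u,\epsilon\})$.

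This extra mollification is unnecessary and creates a small gap in your time-term computation. With the mollified test function the parabolic part reads
\[
\iint_{\Omega_T}\zeta^2\,\partial_t\llbracket u\rrbracket_h\,(M-\llbracket u_\delta\rrbracket_h)\,dx\,dt,
\]
and the mismatch between $\llbracket u\rrbracket_h$ and $\llbracket u_\delta\rrbracket_h$ produces an error term of size $O(\delta/h)$ (since $\partial_t\llbracket u\rrbracket_h=\tfrac1h(u-\llbracket u\rrbracket_h)$ while $|\llbracket u_\delta\rrbracket_h-\llbracket u\rrbracket_h|\le\delta$), which your stated order of limits ($h\to 0$ first, then $\delta\to 0$) does not control. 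The paper avoids this by testing with the unmollified $\zeta^2 g_\epsilon(u)$ and using the monotonicity trick
\[
\partial_t\llbracket u\rrbracket_h\,\big(g_\epsilon(u)-g_\epsilon(\llbracket u\rrbracket_h)\big)
=\tfrac1h\big(u-\llbracket u\rrbracket_h\big)\big(g_\epsilon(u)-g_\epsilon(\llbracket u\rrbracket_h)\big)\le 0
\]
(since $g_\epsilon$ is nonincreasing), which lets one drop the cross term with the correct sign and integrate $\partial_t\llbracket u\rrbracket_h\,g_\epsilon(\llbracket u\rrbracket_h)$ exactly. Replacing your mollified test function by $\zeta^2(M-u_\delta)$ and inserting this sign argument fixes your proof with no further changes.
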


\begin{proof}
For $\epsilon\in(0,M)$, we let $g_\epsilon(s):=M-\max\{\epsilon,s\}$ for any $s\geq 0$. Moreover, we recall Definition \eqref{regularization} of the mollification in time, where throughout this proof we choose $v_0=0$ as initial value. In the regularized inequality \eqref{ineq:regularized}, we insert the test function $\varphi = \zeta^2 g_\epsilon(u)$. Note that $\varphi$ is admissible since $g_\epsilon(u)\in L^2_{\rm loc}(0,T;W^{1,2}_{\rm loc}(\Omega))$. We first treat the evolutionary integral. Using formula \eqref{magic_formula_mollification} for the time derivative of the mollification, integrating by parts and then passing in turn to the limits $h\to 0$ and $\epsilon\to 0$, we find
\begin{align*}
\iint_{\Omega_T} & \zeta^2  \bd_t \llbracket u \rrbracket_h \;\! g_\epsilon(u)\,dx\,dt \\
&= 
\iint_{\Omega_T}  \zeta^2  \bd_t \llbracket u \rrbracket_h \;\! g_\epsilon\big(\llbracket u \rrbracket_h\big)\,dx\,dt +
\iint_{\Omega_T}  \zeta^2  \bd_t \llbracket u \rrbracket_h \;\! 
\Big(g_\epsilon(u) - g_\epsilon\big(\llbracket u \rrbracket_h\big) \Big)\,dx\,dt \\
&\le 
\iint_{\Omega_T}  \zeta^2  \;\!
\bd_t \bigg(\int_M^{\llbracket u \rrbracket_h} g_\epsilon(s)\,ds\bigg)\,dx\,dt 
= 
-2\iint_{\Omega_T} \zeta\partial_t\zeta\;\! \int_M^{\llbracket u \rrbracket_h} g_\epsilon(s)\,ds \,dx\,dt \\
&\to 
-2\iint_{\Omega_T} \zeta\partial_t\zeta\;\! \int_M^u (M-s)\,ds \,dx\,dt
=
\iint_{\Omega_T} \zeta\partial_t\zeta\;\! (M-u)^2 \,dx\,dt.
\end{align*}
Note that, in the limit $h\to 0$, the right-hand side term in \eqref{ineq:regularized} vanishes, and the diffusion term reads as
\begin{align*}
& 2\iint_{\Omega_T}  \zeta\;\! g_\epsilon(u) \nabla u^m \cdot \nabla\zeta\,dx\,dt -  \iint_{\Omega_T\cap\{u>\epsilon\}} \zeta^2\;\! \nabla u^m \cdot\nabla u \,dx\,dt \\
&=
\tfrac{4m}{m+1} \iint_{\Omega_T} \zeta g_\epsilon(u)u^{\frac{m-1}{2}} \nabla u^{\frac{m+1}{2}} \cdot \nabla\zeta\,dx\,dt - 
\tfrac{4m}{(m+1)^2} \iint_{\Omega_T\cap\{u>\epsilon\}}\! \zeta^2 \big| \nabla u^\frac{m+1}{2} \big|^2 dx\,dt .
\end{align*}
Finally, after letting $\epsilon\to 0$, an application of Young's inequality shows that this expression can be bounded from above by
\begin{align*}
2m \iint_{\Omega_T} u^{m-1} (M-u)^2 |\nabla \zeta|^2\,dx\,dt - \tfrac{2m}{(m+1)^2} \iint_{\Omega_T} \zeta^2 \big| \nabla u^\frac{m+1}{2} \big|^2 \,dx\,dt,
\end{align*}
which proves the claim.
\end{proof}

As the proof of Theorem \ref{thm:equivalence} is based on approximations by solutions to obstacle problems for the porous medium equation, we introduce the concept of obstacle problems now. The idea is to find a function $u$ lying above a given obstacle function $\psi$ and attaining fixed boundary and initial values $g$ and $u_0$. In addition to that, $u$ needs to fulfill a variational inequality. More precisely, formally $u$ is required to satisfy
\begin{equation}\label{obstacle inequality}
\begin{cases}
\displaystyle\iint_{\Omega_T} \Big[\partial_t u \big(v^m-u^m\big)+ \nabla u^m \cdot \big(\nabla v^m - \nabla u^m\big) \Big]\, dx \, dt \ge 0, \\
u\ge \psi ~~\text{a.\,e.\ in } \Omega_T, \\
u=g ~~\text{on } \partial\Omega\times (0,T), \\
u(\cdot,0)=u_0 ~~\text{in } \Omega
\end{cases}
\end{equation}
for all comparison maps $v\ge \psi$ which take the same boundary and initial values as $u$. For our purposes, it suffices to consider boundary and initial values given by the obstacle function $\psi$. However, the following estimates hold also in the case of sufficiently regular data $g$ and $u_0$ as long as they are bounded. 

We consider two classes of solutions to the obstacle problem, namely strong and weak solutions (see Def.\ \ref{def:strong_sol} and Def.\ \ref{def:weak_sol}). Since we are interested in approximating bounded $m$-supercaloric functions by solutions to obstacle problems, it is reasonable to assume that the obstacles $\psi$ are smooth and bounded, which guarantees the existence of weak solutions to the obstacle problem. If, in addition, the function $\partial_t\psi  -\Delta \psi^m$ is bounded, we have strong solutions to the obstacle problem. The main difference between both notions is that $\partial_t u$ exists in a distributional sense only for strong solutions.

The reader should be aware that the proof of Theorem \ref{thm:equivalence} has to be performed locally as in \cite[Thm.\ 3.2]{supersol}, meaning that we need to work in a cylinder which is compactly contained in $\Omega_T$. However, for the sake of a friendly notation, in Sections~\ref{sec:pre}--\ref{sec:grad}, we will work in the whole domain $\Omega_T$, having in mind that the explicit argumentation in Section~\ref{sec:proof} will be done in a smaller cylinder. 

Before we give the rigorous definition of strong solutions, we specify the regularity assumptions for the obstacle and the boundary and initial values. First, we want all data to be non-negative and bounded in the sense that
\begin{equation} \label{cond:boundedness}
\psi,g \in L^\infty(\Omega_T) ~~\text{ and }~~ u_0 \in L^\infty(\Omega).
\end{equation}
Next, we impose the compatibility conditions
\begin{equation} \label{cond:compatibility}
\begin{cases}
g \ge \psi ~~\text{a.\,e.\ in } \Omega_T,\\
u_0 \ge \psi(\cdot,0) ~~\text{a.\,e.\ in } \Omega,\\
g(\cdot,0)=u_0 ~~\text{a.\,e.\ in } \Omega,
\end{cases}
\end{equation}
and the following integrability assumptions
\begin{equation} \label{cond:integrability}
\begin{cases}
\psi^m \in L^2(0,T; H^1(\Omega)),\ \partial_t \psi^m \in L^\frac{m+1}{m}(\Omega_T),\ \psi^m(\cdot,0)\in H^1(\Omega),\\
g^m \in L^2(0,T; H^1(\Omega)),\ \partial_t g^m \in L^\frac{m+1}{m}(\Omega_T),\\
u_0\in H^1(\Omega).
\end{cases}
\end{equation}
Note that the assumptions \eqref{cond:integrability} on $\psi$ and $g$ imply $\psi^m,g^m\in C^0([0,T]; L^\frac{m+1}{m}(\Omega))$, and consequently $\psi,g\in C^0([0,T]; L^{m+1}(\Omega))$ since $m>1$. Finally, in order to prove the existence of strong solutions, we need the extra condition
\begin{equation} \label{cond:extra_strong}
\Psi := \partial_t\psi - \Delta \psi^m \in L^\infty(\Omega_T).
\end{equation}

\begin{definition} \label{def:strong_sol}
 Let \eqref{cond:boundedness}--\eqref{cond:extra_strong} be satisfied. A non-negative function $u\in C^0([0,T];L^{m+1}(\Omega))$ is a {\it strong solution} to the obstacle problem \eqref{obstacle inequality} with boundary and initial values $g$ and $u_0$ if $u$ fulfills
\begin{equation}\label{strong sol assumptions}
\begin{cases}
 u^m\in g^m + L^2(0,T;H_0^1(\Omega)),\\
u\ge \psi ~~\text{a.\,e.\ in } \Omega_T,\\
\partial_t u \in L^2 (0,T; H^{-1}(\Omega)),\\
\end{cases}
\end{equation}
$u(\cdot,0)=u_0$ in the $H^{-1}(\Omega)$-sense and the inequality
\begin{align} \label{var_ineq_strong}
\int_0^T \big\langle \partial_t u, \alpha (v^m-u^m)\big\rangle \, dt + \iint_{\Omega_T} \alpha \nabla u^m \cdot \nabla (v^m-u^m) \, dx \,dt \ge 0
\end{align}
for all comparison maps $v$ satisfying the conditions \eqref{strong sol assumptions} and for all non-negative cut-off functions $\alpha \in W^{1,\infty}([0,T])$ with $\alpha(T)=0$, where $\langle\cdot,\cdot\rangle$ denotes the dual pairing between $H^{-1}(\Omega)$ and $H^1_0(\Omega)$.
\end{definition}

However, the smoothness and boundedness of $\psi$ are not enough to guarantee that \eqref{cond:extra_strong} holds. Thus, we also need to consider weak solutions to the obstacle problem. We remark that even though \eqref{cond:boundedness} will be satisfied throughout this paper, it is not a necessary assumption to ensure the existence of weak solutions.

\begin{definition} \label{def:weak_sol}
 Let \eqref{cond:compatibility} and \eqref{cond:integrability} be satisfied. A non-negative function $u\in C^0([0,T];L^{m+1}(\Omega))$ is a {\it weak solution} to the obstacle problem \eqref{obstacle inequality} with boundary and initial values $g$ and $u_0$ if $u$ fulfills
\begin{equation}\label{weak sol assumptions}
\begin{cases}
 u^m\in g^m + L^2(0,T;H_0^1(\Omega)),\\
u\ge \psi ~~\text{a.\,e.\ in } \Omega_T,\\
\end{cases}
\end{equation}
and the inequality
\begin{align}\label{ineq-weak}
\big\langle\hspace{-.1cm}\big\langle \partial_t u, \alpha (v^m-u^m)\big\rangle\hspace{-.1cm}\big\rangle_{u_0} + \iint_{\Omega_T} \alpha \nabla u^m \cdot \nabla (v^m-u^m) \, dx \,dt \ge 0
\end{align}
for all comparison maps $v$ satisfying the conditions \eqref{weak sol assumptions} and $\partial_t v^m \in L^{\frac {m+1}m}(\Omega_T)$, and for all non-negative cut-off functions $\alpha \in W^{1,\infty}([0,T])$ with $\alpha(T)=0$, where we have denoted 
\begin{align*}
 \big\langle\hspace{-.1cm}\big\langle \partial_t u, \alpha (v^m-u^m)\big\rangle\hspace{-.1cm}\big\rangle_{u_0} &:= \iint_{\Omega_T} \Big( \alpha' \left( \tfrac 1 {m+1} u^{m+1}-uv^m\right) - \alpha u \partial_t v^m \Big) \, dx \, dt \\
&~~+ \alpha(0) \int_\Omega \left ( \tfrac 1 {m+1} u_0^{m+1} - u_0 v^m(\cdot,0) \right)\, dx.
\end{align*}
\end{definition}

Note that the initial condition $u(\cdot,0)=u_0$ is incorporated in the variational inequality \eqref{ineq-weak}; see \cite[Lemma 5.2]{obstacle}. 

\begin{remark} \label{rem:sol_supersol}
The strong and weak solutions to the obstacle problem constructed in Sections~\ref{sec:strong} and~\ref{sec:grad} are also weak supersolutions to the porous medium equation (see \cite[Thms.\ 2.6, 2.7]{obstacle}). Thus, energy estimates for weak supersolutions such as Lemma \ref{caccioppoli} hold, provided that the regularity assumption \eqref{integrab_cond} is satisfied.
\end{remark}

Finally, we cite the following parabolic Sobolev's inequality from \cite[Prop.\ I.3.1]{dibenedetto1993} (see also \cite[Lemma 2.1]{PME_measuredata}).

\begin{lemma}\label{sobolev}
  Let $B(x_0,\varrho)\subset\Omega$ and $0<t_1<t_2<T$. If
\begin{align*}
v\in L^\infty\big( t_1,t_2; L^r(B(x_0, \varrho)) \big) \cap L^p\big( t_1,t_2; W^{1,p}(B(x_0, \varrho)) \big)
\end{align*} 
with $p\in (1,\infty)$ and $r\in [1,\infty)$, there exists a constant $C= C(n,p,r)$ such that 
\begin{align*}
   &\int_{t_1}^{t_2}\int_{B(x_0,\varrho)} |v|^\ell \, dx \, dt \\ &~~\le C \int_{t_1}^{t_2}\int_{B(x_0,\varrho)} \left(\Big|\frac{v}{\varrho}\Big|^p + |\nabla v|^p \right) \, dx \, dt \left( \esssup_{t\in (t_1,t_2)} \int_{B(x_0,\varrho)} |v|^r \, dx \right)^{p/n},
  \end{align*}
where $\ell = p \frac {n+r}n$.
\end{lemma}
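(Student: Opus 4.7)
The strategy is the classical Gagliardo--Nirenberg one: reduce the space-time integral of $|v|^\ell$ to a product of a spatial Sobolev-type norm integrated in time and the essential supremum in time of a spatial $L^r$-norm, with interpolation exponents conspiring to yield the specific value $\ell = p(n+r)/n$.

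First I would apply the Sobolev--Poincar\'e inequality on the ball $B(x_0,\varrho)$: for $p<n$ and for each fixed $t\in(t_1,t_2)$,
\[
\|v(\cdot,t)\|_{L^{p^*}(B(x_0,\varrho))} \le C(n,p)\Big( \|\nabla v(\cdot,t)\|_{L^p(B(x_0,\varrho))} + \varrho^{-1}\|v(\cdot,t)\|_{L^p(B(x_0,\varrho))} \Big),
\]
where $p^*=np/(n-p)$. The lower-order term with the $\varrho^{-1}$ scaling is produced by estimating the spatial mean $\bar v$ via H\"older on a unit ball and then rescaling, and it is exactly what gives rise to the factor $|v/\varrho|^p$ in the final inequality. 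For $p\ge n$, I would replace $p^*$ by any sufficiently large but finite exponent $q$ using the embedding $W^{1,p}(B)\hookrightarrow L^q(B)$; the remainder of the argument is unchanged up to trivial adjustments in $\theta$.

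Second, I would H\"older-interpolate between $L^r$ and $L^{p^*}$ on each time slice,
\[
\|v(\cdot,t)\|_{L^\ell(B(x_0,\varrho))} \le \|v(\cdot,t)\|_{L^{p^*}(B(x_0,\varrho))}^{\theta}\, \|v(\cdot,t)\|_{L^r(B(x_0,\varrho))}^{1-\theta},
\]
with $\theta\in(0,1)$ determined by $\tfrac{1}{\ell} = \tfrac{\theta}{p^*}+\tfrac{1-\theta}{r}$. The decisive arithmetic is that the prescribed choice $\ell=p(n+r)/n$ forces exactly $\theta\ell=p$ and $(1-\theta)\ell=pr/n$; raising the interpolation inequality to the $\ell$-th power therefore produces
\[
\int_{B(x_0,\varrho)}|v|^\ell\,dx \le \|v(\cdot,t)\|_{L^{p^*}}^{p}\, \|v(\cdot,t)\|_{L^r}^{pr/n}.
\]

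Finally, I would integrate in $t$ from $t_1$ to $t_2$, pull the $L^r$-factor out as an essential supremum (its exponent $pr/n$ factors as $r\cdot(p/n)$, which is precisely the form on the right-hand side of the lemma), and estimate the remaining time integral of $\|v(\cdot,t)\|_{L^{p^*}}^{p}$ by the Sobolev--Poincar\'e inequality from the first step, squaring the $v/\varrho$ and $\nabla v$ terms into $L^p(\Omega_T)$. The main obstacle, as I see it, is not a single deep step but rather the bookkeeping: making the arithmetic of $\theta$ line up on the nose with $\ell = p(n+r)/n$, treating the borderline and supercritical regimes $p\ge n$ uniformly, and tracking the scaling in the Sobolev--Poincar\'e step carefully so that the lower-order contribution appears as $|v/\varrho|^p$ with the correct power of $\varrho$ rather than an extraneous one.
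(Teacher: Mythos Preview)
The paper does not prove this lemma; it merely cites it from \cite[Prop.\ I.3.1]{dibenedetto1993} (and \cite[Lemma 2.1]{PME\_measuredata}) as a known tool. Your argument is precisely the standard proof one finds in those references: apply the Sobolev inequality on each time slice, interpolate against the $L^r$-norm so that the exponents produce $\theta\ell=p$ and $(1-\theta)\ell=pr/n$, and then integrate in time while pulling the $L^r$-factor out as an essential supremum. So there is nothing to compare --- your route \emph{is} the textbook route.

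One small caveat: your treatment of the range $p\ge n$ is a little loose. Replacing $p^*$ by an arbitrary large finite $q$ and then ``adjusting $\theta$'' does not quite preserve the crucial identity $\theta\ell=p$; with a generic $q$ you obtain $\theta\ell\ne p$ and the time integration no longer closes with a constant depending only on $(n,p,r)$. The clean fix is to invoke the Gagliardo--Nirenberg inequality directly on $B(x_0,\varrho)$ in the form
\[
\|v(\cdot,t)\|_{L^\ell} \le C\big(\|\nabla v(\cdot,t)\|_{L^p}+\varrho^{-1}\|v(\cdot,t)\|_{L^p}\big)^{\theta}\,\|v(\cdot,t)\|_{L^r}^{1-\theta},
\qquad \theta=\tfrac{n}{n+r},
\]
which is valid for all $p\in(1,\infty)$ since $\theta\in(0,1)$, and which yields $\theta\ell=p$ and $(1-\theta)\ell=pr/n$ on the nose regardless of whether $p<n$, $p=n$, or $p>n$. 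With that adjustment your proof is complete.
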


\section{Gradient estimates for strong solutions}\label{sec:strong}

In this section, we deal with strong solutions to the obstacle problem and assume that \eqref{cond:boundedness}--\eqref{cond:extra_strong} hold. We will show the existence of strong solutions to the obstacle problem by approximating them by weak solutions to the penalized porous medium equation
\begin{equation}
  \label{eq:penalizedPME}
  \begin{cases}
    \partial_t u - \Delta u^m = \Psi_+ \xi_\delta (\psi^m - u^m) ~~\text{in } \Omega_T, \\
u=g ~~\text{on } \bd \Omega \times (0,T),\\
u(\cdot,0)=u_0 ~~\text{in }\Omega.
  \end{cases}
\end{equation}
Here, for $\delta>0$, the function $\xi_\delta : \re \rightarrow [0,1]$ is smooth and satisfies
\[
\begin{cases}
  \xi_\delta  = 1 \quad \text{in } [0,\infty),\\
\xi_\delta = 0 \quad \text{in } (-\infty, -\delta],\\
|\xi'_\delta| \le \frac 2 \delta.
\end{cases}
\]
Observe that, whenever $u^m\ge\psi^m+\delta$, the term $\xi_\delta(\psi^m - u^m)$ vanishes and thus, \eqref{eq:penalizedPME} reduces to the ordinary initial-boundary value problem for the porous medium equation. Next, we give the rigorous definition for weak solutions to the above penalized equation.

\begin{definition}
A non-negative function $u\in C^0([0,T]; L^{m+1}(\Omega))$ is a weak solution to the penalized porous medium equation \eqref{eq:penalizedPME} if $u$ fulfills
\begin{align*}
\begin{cases}
 u^m\in g^m+ L^2(0,T;H_0^1(\Omega)),\\
u(\cdot,0)=u_0 ~~\text{in } \Omega,
\end{cases}
\end{align*}
and the equation
\begin{equation} \label{weak_penalized_PME}
\iint_{\Omega_T} \Big(-u\partial_t\ph + \nabla u^m \cdot \nabla \ph \Big) \, dx \, dt = \iint_{\Omega_T} \Psi_+ \xi_\delta (\psi^m - u^m)\ph \, dx \, dt
\end{equation}
for any test function $\ph\in C_0^\infty(\Omega_T)$.
\end{definition}

The associated averaged equation can be deduced as in \cite[eq.\ (6.5)]{obstacle} and is given by
\begin{equation} \label{eq:regularized}
\begin{aligned}
&\iint_{\Omega_T} \Big( \bd_t \llbracket u \rrbracket_h  \ph + \nabla \llbracket u^m \rrbracket_h \cdot \nabla \ph \Big)\, dx \,dt \\ &~~= \iint_{\Omega_T} \llbracket \xi_\delta (\psi^m-u^m) \Psi_+ \rrbracket_h \ph \, dx \, dt + \frac 1 h \int_\Omega u_0 \int_0^T \ph e^{-\frac s h } \, ds \, dx
\end{aligned}
\end{equation}
for any test function $\ph\in L^2(0,T;H_0^1(\Omega))$. As before, $\llbracket\cdot\rrbracket_h$ is defined according to \eqref{regularization} with $v_0=0$. We cite the following energy estimates for weak solutions to the penalized porous medium equation from \cite[Lemma 7.2]{obstacle}.

\begin{lemma}\label{penalized energy estimates}
  Let $u$ be a weak solution to \eqref{eq:penalizedPME}. Then, we have
\[
\sup_{t\in [0,T]}\int_{\Omega} u(\cdot, t)^{m+1} \, dx + \iint_{\Omega_T} |\nabla u^m|^2 \, dx \, dt \le C A
\]
and
\[
\| \partial_t u \|^2_{L^2(0,T; H^{-1}(\Omega))} \le C A.
\]
Here, $C$ is a constant depending on $n, m, \diam(\Omega)$ and $T$, and $A$ is defined as 
\begin{equation}
\begin{aligned} \label{def:constantA}
A &= \sup_{t\in [0,T]} \int_\Omega g(\cdot,t)^{m+1} \, dx + \int_\Omega u_0^{m+1} \, dx \\&~~~+ \iint_{\Omega_T} \Big ( |\Psi_+|^2 + |\nabla g^m|^2 + |\partial_t g^m|^{\frac {m+1}m} \Big) \, dx \, dt.
\end{aligned}
\end{equation}
\end{lemma}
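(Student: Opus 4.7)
The plan is to derive both bounds by testing the averaged equation \eqref{eq:regularized} with (essentially) $u^m - g^m$ for the first estimate, and then using $H^{-1}$-duality for the second. For the first, I would use the test function
\[
\varphi_h = \big(\llbracket u^m\rrbracket_h - g^m\big)\,\eta_\tau,
\]
where $\eta_\tau\in W^{1,\infty}([0,T])$ is a Lipschitz cut-off approximating $\chi_{[0,\tau]}$ for some $\tau\in(0,T]$. This lies in $L^2(0,T;H^1_0(\Omega))$ since $u^m - g^m \in L^2(0,T;H^1_0(\Omega))$ and the time-mollification preserves this membership.

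The key step is the parabolic term. Splitting
\[
\partial_t\llbracket u\rrbracket_h \cdot \llbracket u^m\rrbracket_h = \partial_t\llbracket u\rrbracket_h \cdot (\llbracket u\rrbracket_h)^m + \partial_t\llbracket u\rrbracket_h \cdot \big(\llbracket u^m\rrbracket_h - (\llbracket u\rrbracket_h)^m\big),
\]
the first summand equals $\tfrac{1}{m+1}\partial_t(\llbracket u\rrbracket_h)^{m+1}$; after integration in $(0,\tau)$ the contribution at $t=0$ vanishes because $\llbracket u\rrbracket_h(\cdot,0)=0$, and this cancellation precisely absorbs the $\tfrac{1}{h}$-term on the right-hand side of \eqref{eq:regularized} in the limit $h\to 0$ by virtue of $u(\cdot,0)=u_0$. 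Using identity \eqref{magic_formula_mollification}, $\partial_t\llbracket u\rrbracket_h = \tfrac{1}{h}(u - \llbracket u\rrbracket_h)$, the second summand is sign-controlled via monotonicity of $s\mapsto s^m$ up to correction terms vanishing as $h\to 0$. Combined with $-\partial_t\llbracket u\rrbracket_h \cdot g^m$, which after integration by parts in $t$ becomes $\iint u\,\partial_t g^m$ plus boundary data, one arrives at $\tfrac{1}{m+1}\int_\Omega u(\cdot,\tau)^{m+1}\,dx$ on the left, minus terms estimable using $\partial_t g^m\in L^{(m+1)/m}(\Omega_T)$ and the initial value $u_0$.

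The remaining terms are handled routinely: the diffusion integral produces $\iint|\nabla u^m|^2$ minus a mixed term $\iint \nabla u^m\cdot\nabla g^m$ absorbed by Young's inequality; the penalization, using $|\xi_\delta|\le 1$ and Poincar\'e, is bounded by $C\iint|\Psi_+|^2 + \varepsilon\iint|\nabla u^m|^2$ and absorbed on the left. Collecting everything and taking the supremum in $\tau$ gives the first estimate with constant $CA$. For the $H^{-1}$-bound on $\partial_t u$, I would use duality: for any $\phi\in L^2(0,T;H^1_0(\Omega))$, \eqref{weak_penalized_PME} gives
\[
\int_0^T\langle\partial_t u,\phi\rangle\,dt = -\iint_{\Omega_T}\nabla u^m\cdot\nabla\phi\,dx\,dt + \iint_{\Omega_T}\Psi_+\xi_\delta(\psi^m-u^m)\phi\,dx\,dt,
\]
and Cauchy--Schwarz combined with Poincar\'e (and $|\xi_\delta|\le 1$) bounds the right-hand side by $C(\|\nabla u^m\|_{L^2(\Omega_T)} + \|\Psi_+\|_{L^2(\Omega_T)})\|\phi\|_{L^2(0,T;H^1_0)}$; squaring and applying the first estimate closes the proof.

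The main obstacle is the rigorous justification of the parabolic term manipulation, given the gap $\llbracket u^m\rrbracket_h \neq (\llbracket u\rrbracket_h)^m$ and the need to extract a pointwise-in-$\tau$ evaluation of $u^{m+1}$. This is reconciled by the combination of the mollification identity \eqref{magic_formula_mollification} with the monotonicity of $s\mapsto s^m$, together with careful bookkeeping of the initial data encoded in the $\tfrac{1}{h}$-correction on the right-hand side of \eqref{eq:regularized}.
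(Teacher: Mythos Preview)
Your approach is essentially the same as the paper's: the paper states that the proof follows by inserting the test function $\varphi=\rchi_{[0,\tau]}(u^m-g^m)$ into the regularized equation \eqref{eq:regularized} and refers to \cite{obstacle} for details, and your sketch carries out precisely this computation (with the $H^{-1}$ bound by duality, as expected).

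One technical remark: you mollify the test function to $\big(\llbracket u^m\rrbracket_h-g^m\big)\eta_\tau$, whereas the paper uses $u^m-g^m$ directly. The latter is cleaner for the parabolic term: with $\varphi=u^m$ one obtains
\[
\partial_t\llbracket u\rrbracket_h\cdot u^m
=\tfrac{1}{m+1}\,\partial_t\big(\llbracket u\rrbracket_h\big)^{m+1}
+\tfrac{1}{h}\big(u-\llbracket u\rrbracket_h\big)\big(u^m-(\llbracket u\rrbracket_h)^m\big),
\]
and the second summand is \emph{nonnegative} by monotonicity of $s\mapsto s^m$, so it can simply be dropped. With your choice the corresponding error term is $\tfrac{1}{h}(u-\llbracket u\rrbracket_h)\big(\llbracket u^m\rrbracket_h-(\llbracket u\rrbracket_h)^m\big)$, which has no obvious sign; your claim that it is ``sign-controlled via monotonicity up to correction terms vanishing as $h\to 0$'' is not immediate because of the $\tfrac{1}{h}$ factor. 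This is not a gap in strategy, only in execution: switching to the unmollified $u^m$ in the test function (as the paper does) removes the issue entirely.
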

The proof follows from a formal calculation by inserting the test function $\ph= \rchi_{[0,\tau]} (u^m - g^m)$ for some $\tau\in (0,T]$ in \eqref{eq:regularized}, where $\rchi_{[0,\tau]}$ denotes the characteristic function of the interval $[0,\tau]$. For the details, we refer to \cite{obstacle}.

Next, we will prove the following gradient estimate.

\begin{lemma}\label{penalized grad}
  Let $u$ be a weak solution to the penalized porous medium equation \eqref{eq:penalizedPME} and suppose that $u\in L^2_{\textup{loc}} ( 0,T ; H^1_{\textup{loc}}(\Omega))$. Then, we have $\nabla u^\frac{m+1}{2} \in L^2_\textup{loc}(\Omega_T,\rn)$, together with the estimate
\begin{align*}
  \iint_{U_{t_1,t_2}} \big|\nabla u^{\frac{m+1}2} \big|^2 \, dx \, dt \le C(A+1)
\end{align*}
for any $U\Subset\Omega$ and any $0<t_1<t_2<T$ with $A$ as in \eqref{def:constantA} and a constant $C=C(n,m,U,t_1,t_2,\Omega,T)$.
\end{lemma}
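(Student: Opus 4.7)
I would prove the estimate by testing the averaged equation \eqref{eq:regularized} with the function $\varphi = \zeta^2\llbracket u\rrbracket_h$, where $\zeta\in C_0^\infty(\Omega_T)$ is a spatio-temporal cut-off with $0\le\zeta\le 1$ and $\zeta\equiv 1$ on $U_{t_1,t_2}$. This test function is admissible in $L^2(0,T;H^1_0(\Omega))$ thanks to the standing hypothesis $u\in L^2_\textup{loc}(0,T;H^1_\textup{loc}(\Omega))$ and the fact that $\zeta$ has compact support in $\Omega$. The strategy is the same one that produces the ``$u^{(m+1)/2}$-energy'': testing with (a mollification of) $u$ itself generates the quantity $m u^{m-1}|\nabla u|^2$, which up to a factor is $|\nabla u^{(m+1)/2}|^2$.

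For the time term, formula \eqref{magic_formula_mollification} gives $\partial_t\llbracket u\rrbracket_h\cdot\llbracket u\rrbracket_h = \tfrac12\partial_t(\llbracket u\rrbracket_h)^2$, and integration by parts in $t$ yields $-\iint\zeta\partial_t\zeta\,(\llbracket u\rrbracket_h)^2$. Using the $L^\infty(L^{m+1})$-bound from Lemma~\ref{penalized energy estimates} together with the inequality $s^2\le 1+s^{m+1}$, one gets $u\in L^2_\textup{loc}$, so $\llbracket u\rrbracket_h\to u$ in $L^2_\textup{loc}$ and the limit is $-\iint\zeta\partial_t\zeta\,u^2$. The diffusion term converges, by the same Lemma~\ref{penalized energy estimates}, to $\iint\nabla u^m\cdot\nabla(\zeta^2 u)=2\iint\zeta u\,\nabla u^m\cdot\nabla\zeta+\iint\zeta^2\nabla u^m\cdot\nabla u$, and the Sobolev chain rule gives $\nabla u^m\cdot\nabla u=m u^{m-1}|\nabla u|^2$. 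The penalization term tends to $\iint\Psi_+\,\xi_\delta(\psi^m-u^m)\,\zeta^2 u$, whose absolute value is bounded by $\tfrac12\iint|\Psi_+|^2+\tfrac12\iint\zeta^4 u^2$, and the initial term vanishes in the limit because $\zeta$ is supported away from $\{t=0\}$, so that the factor $e^{-s/h}$ forces that integral to decay faster than $1/h$.

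Rearranging, I arrive at
\begin{equation*}
\iint \zeta^2 m u^{m-1}|\nabla u|^2\,dx\,dt
\le
\iint |\partial_t\zeta|\,u^2\,dx\,dt + 2\iint \zeta |\nabla\zeta|\,u\,|\nabla u^m|\,dx\,dt + \iint|\Psi_+|\,\zeta^2 u\,dx\,dt.
\end{equation*}
On the cross term I would write $u\,|\nabla u^m|=m\,u^{(m+1)/2}\cdot u^{(m-1)/2}|\nabla u|$ and apply Young's inequality to absorb half of $\iint\zeta^2 m u^{m-1}|\nabla u|^2$ into the left-hand side, leaving a term of the form $C\iint u^{m+1}|\nabla\zeta|^2$. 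Each of the remaining terms is then controlled by $C(A+1)$ via Lemma~\ref{penalized energy estimates}: $\iint u^{m+1}\le T\sup_t\int u^{m+1}\le CA$, and $\iint u^2\le C(1+A)$ by the pointwise inequality $s^2\le 1+s^{m+1}$. Finally, since $u^{(m-1)/2}|\nabla u|\in L^2(\operatorname{supp}\zeta)$, the Sobolev chain rule (applied to the truncations $\min\{u,k\}$ and passage $k\to\infty$) shows $u^{(m+1)/2}\in H^1_\textup{loc}$ with $\nabla u^{(m+1)/2}=\tfrac{m+1}{2}u^{(m-1)/2}\nabla u$, hence $\iint\zeta^2|\nabla u^{(m+1)/2}|^2=\tfrac{(m+1)^2}{4}\iint\zeta^2 u^{m-1}|\nabla u|^2\le C(A+1)$.

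\textbf{Expected main difficulty.} The subtle point — in contrast with the Caccioppoli estimate of Lemma~\ref{caccioppoli}, which was proved using the bounded test function $\zeta^2 g_\epsilon(u)$ — is that here $u$ need not be bounded, so the test function $\zeta^2\llbracket u\rrbracket_h$ must be justified directly from the local $H^1$-assumption, and the quantity $\iint\zeta^2 u^{m-1}|\nabla u|^2$ we wish to estimate appears (with the correct sign) on both sides of the identity. One has to order the argument so that the energy estimates of Lemma~\ref{penalized energy estimates} provide finiteness of all terms on the right before any absorption is performed; otherwise the bound would be circular. After that, deducing the existence of $\nabla u^{(m+1)/2}$ and the cited identity is the standard truncation argument and is routine.
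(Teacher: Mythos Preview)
Your proposal is correct and follows essentially the same approach as the paper. The only cosmetic differences are that the paper inserts $\varphi=\zeta^2 u$ (rather than $\zeta^2\llbracket u\rrbracket_h$) into the averaged equation \eqref{eq:regularized} and disposes of the extra term $\zeta^2(u-\llbracket u\rrbracket_h)\partial_t\llbracket u\rrbracket_h\ge 0$ by sign, and it establishes the existence of $\nabla u^{(m+1)/2}$ via truncation \emph{before} deriving the estimate rather than after; neither change affects the substance of the argument.
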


\begin{proof}
We fix $U,t_1,t_2$ as in the statement of the lemma. To show that $\nabla u^\frac{m+1}{2}$ exists, we define $u_k = \min \{ u,k \}$ for $k\in\n$, and note that $[0,k]\ni s\mapsto s^\frac{m+1}{2}$ is Lipschitz continuous so that $u_k^\frac{m+1}{2}$ is weakly differentiable with $$\nabla u_k^\frac{m+1}{2}=\tfrac{m+1}{2}u_k^\frac{m-1}{2} \nabla u_k.$$ More precisely, for any $i\in\{1,\dots,n\}$, we have that 
\begin{align*}
\iint_{U_{t_1,t_2}} u_k^\frac{m+1}{2} \partial_{x_i} \varphi \,dx\,dt 
= 
-\tfrac{m+1}{2} \iint_{U_{t_1,t_2}} u_k^\frac{m-1}{2} \partial_{x_i} u_k\, \varphi \,dx\,dt
\end{align*}
for any $\varphi\in C_0^\infty(U_{t_1,t_2})$. As the right-hand side remains bounded in the limit $k\to\infty$, the gradient $\nabla u^\frac{m+1}{2}$ exists and is given by $\nabla u^\frac{m+1}{2} = \frac{m+1}{2} u^\frac{m-1}{2} \nabla u$. Next, in order to prove the gradient bound, we choose a non-negative cut-off function $\zeta\in C_0^\infty (\Omega_T)$ with $\zeta=1$ in $U_{t_1,t_2}$ and $|\nabla \zeta| + |\partial_t\zeta| \leq \frac{C}{\dist(U_{t_1,t_2},\,\partial\Omega_T)}$. Then, we insert the test function $\ph= \zeta^2 u$ in the regularized equation \eqref{eq:regularized}. We begin by treating the term involving the time derivative. Observing that, by \eqref{magic_formula_mollification}, we have 
\begin{align*}
\iint_{\Omega_T} \zeta^2 u \bd_t  \llbracket u \rrbracket_h  \, dx \, dt &= \iint_{\Omega_T} \zeta^2 \llbracket u \rrbracket_h \bd_t  \llbracket u \rrbracket_h  \, dx \, dt + \iint_{\Omega_T} \zeta^2 \big(u - \llbracket u \rrbracket_h\big) \bd_t  \llbracket u \rrbracket_h \, dx \, dt \\ &\ge  \tfrac 12 \iint_{\Omega_T} \zeta^2 \bd_t \llbracket u \rrbracket_h ^2 \, dx \,dt = - \iint_{\Omega_T} \zeta \partial_t \zeta \llbracket u \rrbracket_h ^2 \, dx \, dt
\end{align*}
and considering \cite[Lemma 2.2]{kinnunen_lindqvist_reg} for the convergence properties of the mollification, we may let $h\rightarrow 0$. Then, inserting the above inequality in \eqref{eq:regularized}, we get
\begin{align*}
  \iint_{\Omega_T} \Big(-\zeta \partial_t \zeta u^2 + \nabla u^m \cdot \nabla (\zeta^2 u) \Big)\, dx \,dt  \le& \iint_{\Omega_T}  \zeta^2 u\, \Psi_+ \xi_\delta (\psi^m-u^m) \, dx \, dt. 
\end{align*}
By Young's inequality, we may write 
\begin{align*}
\nabla u^m \cdot \nabla (\zeta^2 u) &= \tfrac{4m}{(m+1)^2}\zeta^2 \big|\nabla u^{\frac {m+1} 2 } \big|^2 + \tfrac{4m}{m+1} \zeta u^{\frac{m+1}{2}} \nabla u^{\frac{m+1}{2}} \cdot \nabla \zeta \\ 
&\geq \tfrac{2m}{(m+1)^2}\zeta^2 \big|\nabla u^{\frac {m+1}{2} } \big|^2 - 2m u^{m+1} |\nabla \zeta|^2.
\end{align*}
Furthermore, we treat the penalty term by applying Young's inequality to $u\Psi_+$ and by using the facts that $\zeta\le 1$ and $\xi_\delta \le 1$. In this way, we find 
\begin{align*}
\iint_{\Omega_T} & \zeta^2 \big|\nabla u^{\frac{m+1}2} \big|^2 \, dx \, dt \\ &\le C \iint_{\Omega_T} \Big( u^{m+1} |\nabla \zeta|^2 + \Psi_+^2 + u^2 + \zeta|\partial_t\zeta|u^2 \Big) \, dx \,dt \\ &\leq C \iint_{\Omega_T} \Big( u^{m+1} |\nabla \zeta|^2 + \Psi_+^2 + u^{m+1} + \big( \zeta|\partial_t\zeta| \big)^\frac{m+1}{m-1} +1 \Big) \, dx \,dt, 
\end{align*}
and, employing the $L^\infty(0,T;L^{m+1}(\Omega))$-bound for $u$ from Lemma \ref{penalized energy estimates}, this inequality proves the claim.
\end{proof}

Now, we will show that there exists a weak solution to \eqref{eq:penalizedPME} satisfying the above energy estimates. Our contribution is proving that the estimate for $\nabla u^{\frac {m+1}2}$ holds, and for the reader's convenience, we present the key ideas of the existence proof from \cite[Lemma 7.3]{obstacle} as well.

\begin{lemma} \label{exist_weak_penalized}
  There exists a weak solution $u_\delta$ to \eqref{eq:penalizedPME} such that $u_\delta\ge \psi$ a.\,e.\ in $\Omega_T$. Moreover, we have $\nabla u_\delta^\frac{m+1}{2} \in L^2_\textup{loc}(\Omega_T,\rn)$, and the following estimates hold:
\begin{equation}\label{eq:energy estimates}
\sup_{t\in[0,T]} \int_\Omega u_\delta^{m+1} \, dx + \big\| \nabla u_\delta^m\big\|_{L^2(\Omega_T,\rn)} + \big\| \partial_t u_\delta \big\|_{L^2(0,T ; H^{-1}(\Omega))}\le C_1
\end{equation}
and
\begin{equation}\label{eq:energy estimates_1}
\big\|\nabla u_\delta^{\frac {m+1}2}\big\|_{L^2(U_{t_1,t_2},\rn)} \le C_2,
\end{equation}
where $C_1 = C_1(n,m,\Omega,T,A)$ and $C_2= C_2(n,m,U,t_1,t_2,\Omega,T,A)$ are constants and $A$ is as in \eqref{def:constantA}.
\end{lemma}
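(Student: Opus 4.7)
The strategy is to follow the approximation scheme of \cite[Lemma 7.3]{obstacle} which constructs a weak solution $u_\delta$ to \eqref{eq:penalizedPME} as a limit of more regular solutions, and to graft the new gradient bound \eqref{eq:energy estimates_1} onto that construction. Concretely, I would introduce a further regularization parameter $\eps\in(0,1)$ that removes the degeneracy of the nonlinearity (for instance replacing $s\mapsto s^m$ by a suitably mollified, uniformly non\mbox{-}degenerate increasing function with the same growth, or equivalently adding $\eps\Delta u$ to the equation together with smoothed data $\psi^m_\eps$, $g^m_\eps$, $u_{0,\eps}$ satisfying \eqref{cond:boundedness}--\eqref{cond:extra_strong}). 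Classical non\mbox{-}degenerate parabolic theory (Galerkin method or Schauder fixed\mbox{-}point applied to the contraction induced by the Lipschitz map $s\mapsto \Psi_+\xi_\delta(\psi^m-s)$) then produces approximating solutions $u_{\delta,\eps}\in C^0([0,T];L^{m+1}(\Omega))$ which moreover belong to $L^2_{\rm loc}(0,T;H^1_{\rm loc}(\Omega))$.

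Next I would derive the two sets of \emph{uniform} estimates. For \eqref{eq:energy estimates} one tests the mollified identity \eqref{eq:regularized} with $\ph=\rchi_{[0,\tau]}(u_{\delta,\eps}^m-g^m)$ and mimics the computation behind Lemma \ref{penalized energy estimates}; the resulting bounds depend only on $n,m,\Omega,T$ and $A$, hence are uniform in $\eps$. The additional regularity $u_{\delta,\eps}\in L^2_{\rm loc}(0,T;H^1_{\rm loc}(\Omega))$ makes Lemma \ref{penalized grad} applicable to $u_{\delta,\eps}$ on every subcylinder $U_{t_1,t_2}\Subset\Omega_T$, yielding
\begin{align*}
  \iint_{U_{t_1,t_2}} \bigl|\nabla u_{\delta,\eps}^{(m+1)/2}\bigr|^2 \dx x \dx t \le C(A+1),
\end{align*}
uniformly in $\eps$, where $C=C(n,m,U,t_1,t_2,\Omega,T)$.

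It remains to pass to the limit $\eps\to 0$. Weak compactness in $L^2(0,T;H^1(\Omega))$, $L^\infty(0,T;L^{m+1}(\Omega))$ and $L^2(0,T;H^{-1}(\Omega))$ delivers a subsequence with $u_{\delta,\eps}^m\rightharpoonup u_\delta^m$ and $\partial_t u_{\delta,\eps}\rightharpoonup \partial_t u_\delta$; the Aubin--Lions lemma then upgrades this to strong $L^{m+1}(\Omega_T)$\mbox{-}convergence of $u_{\delta,\eps}$ and a.e.\ convergence along a further subsequence, from which $\psi\leq u_\delta$ a.e.\ and the limit identity \eqref{weak_penalized_PME} follow exactly as in \cite{obstacle}. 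Lower semicontinuity of the norm under weak convergence gives \eqref{eq:energy estimates}. The main obstacle is preserving \eqref{eq:energy estimates_1} in the limit: the bound above provides a weak $L^2_{\rm loc}(\Omega_T,\rn)$\mbox{-}limit $G_\delta$ for $\nabla u_{\delta,\eps}^{(m+1)/2}$, and to conclude I need to identify $G_\delta=\nabla u_\delta^{(m+1)/2}$. This follows because the a.e.\ convergence $u_{\delta,\eps}\to u_\delta$ together with the uniform $L^\infty_tL^{m+1}_x$\mbox{-}bound yields $u_{\delta,\eps}^{(m+1)/2}\to u_\delta^{(m+1)/2}$ in $L^2_{\rm loc}(\Omega_T)$, so the weak gradient limit is necessarily the distributional gradient of the limit, and then lower semicontinuity transfers the local $L^2$\mbox{-}bound to $\nabla u_\delta^{(m+1)/2}$. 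All remaining checks (the penalization term converges by dominated convergence using boundedness of $\xi_\delta$, $\Psi_+\in L^\infty$ and a.e.\ convergence of $u_{\delta,\eps}^m$) are routine.
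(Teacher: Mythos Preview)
Your proposal is correct and follows essentially the same approximation strategy as the paper: regularize to a non\mbox{-}degenerate problem, obtain uniform versions of Lemmas~\ref{penalized energy estimates} and~\ref{penalized grad}, and pass to the limit by weak compactness.

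The paper's regularization is, however, more specific and buys a cleaner argument. Rather than adding $\eps\Delta u$ or mollifying the nonlinearity generically, the paper truncates the diffusion coefficient to $a_\eps(s)=m\bigl(\min\{\max\{s,\eps\},\eps^{-1}\}\bigr)^{m-1}$ and simultaneously \emph{shifts the data} via two parameters, setting $\psi_\eps=\psi+\eps$, $g_{\eps,\gamma}=(g^m+\gamma^m)^{1/m}+\eps$, $u_{0,\eps,\gamma}=u_0+\eps+\gamma$. The point is that the comparison principle for the non\mbox{-}degenerate problem (with the constant solutions $\eps+\gamma$ and $N$) forces $\eps+\gamma\le u_{\eps,\gamma,\delta}\le N$ a.e., and on this range $a_\eps(u_{\eps,\gamma,\delta})\nabla u_{\eps,\gamma,\delta}=\nabla u_{\eps,\gamma,\delta}^m$. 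Hence $u_{\eps,\gamma,\delta}$ is an \emph{honest} weak solution to \eqref{eq:penalizedPME} (with perturbed data), and Lemmas~\ref{penalized energy estimates} and~\ref{penalized grad} apply verbatim. In your scheme, by contrast, $u_{\delta,\eps}$ solves a perturbed equation, so \eqref{eq:regularized} does not hold as stated and you would have to re\mbox{-}derive both lemmas carrying along the extra regularizing term; this is routine but less direct than you suggest when you say ``makes Lemma~\ref{penalized grad} applicable''. For the passage to the limit the paper invokes the local H\"older estimate of DiBenedetto (uniform in $\eps$) to obtain locally uniform convergence, and then monotone convergence in $\gamma$, instead of Aubin--Lions; either route is adequate here.
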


\begin{proof}
For $\eps, \gamma, \delta \in (0,1]$, we define 
\begin{align*}
 & \psi_\eps = \psi + \eps, \\
&g_{\eps,\gamma} = (g^m+\gamma^m)^{\frac 1 m} + \eps,\\
&u_{0,\eps,\gamma}= u_0 + \eps + \gamma,\\
&\Psi_\eps = \partial_t \psi_\eps - \Delta \psi_\eps^m.
\end{align*}
One can easily see that $g_{\eps,\gamma} \ge g+ \eps \ge \psi_\eps$ and $u_{0,\eps,\gamma} \ge \psi_{\eps}(\cdot, 0)$. Denoting
\[
N= \max \bigg\{ \sup_{\Omega_T} (\psi_\eps^m + \delta )^{\frac 1 m},\, \sup_{\Omega_T} g_{\eps,\gamma},\, \sup_{\Omega} u_{0,\eps,\gamma} \bigg\},
\]
it follows that $\psi_\eps,\, g_{\eps, \gamma},\, u_{0,\eps,\gamma} \le N$, and by choosing $\eps$ and $\gamma$ smaller if necessary, we can assure that $N\le \frac 1 {\eps+\gamma}$. Next, we define
\[
a_{\eps}(s) =
\begin{cases}
  m \eps^{m-1}, \quad 0\le s \le \eps,\\
m s^{m-1}, \quad \eps < s \le \frac 1 \eps, \\
m \eps^{1-m}, \quad s\ge \frac 1 \eps.
\end{cases}
\]
Then, by \cite[Thm.\ 1.2, p.\ 162\ f.]{Lions}, there exists a weak solution $u_{\eps,\gamma,\delta} \in C^0([0,T]; L^2(\Omega)) \cap L^2(0,T; H^1(\Omega))$ to the initial-boundary value problem 
  \[
\begin{cases}
   \bd_t u - \div \big(a_\eps (u) \nabla u\big) = (\Psi_\eps)_+ \xi_\delta (\psi_\eps^m - u^m) ~~\text{in } \Omega_T,\\
u = g_{\eps,\gamma} ~~\text{on } \bd \Omega \times (0,T),\\
u(\cdot,0)=u_{0,\eps,\gamma } ~~\text{in } \Omega.
\end{cases}  
\]
Since $u_{\eps,\gamma,\delta}$ satisfies the comparison principle (see \cite[Lemma 7.1]{obstacle}) and the constants $\eps+\gamma$ and $N$ are solutions, we have 
\[
\eps+\gamma \le u_{\eps,\gamma,\delta} \le N \le \frac 1 {\eps+\gamma} \quad \text{a.\,e.\ in } \Omega_T.
\]
Thus, we obtain $a_\eps (u_{\eps,\gamma,\delta}) \nabla u_{\eps,\gamma,\delta} = \nabla u_{\eps,\gamma,\delta}^m$, which implies that $u_{\eps,\gamma,\delta}$ is also a weak solution to \eqref{eq:penalizedPME} with boundary values $g_{\eps,\gamma}$ and initial values $u_{0,\eps,\gamma}$. Hence, by Lemma \ref{penalized energy estimates} and Lemma \ref{penalized grad}, we have the following energy estimates for $u_{\eps,\gamma,\delta}$:
\begin{equation}\label{eq:pre_energy estimates}
  \sup_{t\in[0,T]} \int_\Omega u_{\eps,\gamma,\delta}^{m+1} \, dx + \big\| \nabla u_{\eps,\gamma,\delta}^m\big\|_{L^2(\Omega_T,\rn)} +
  \big\| \partial_t u_{\eps,\gamma,\delta} \big\|_{L^2(0,T ; H^{-1}(\Omega))}\le C_1
\end{equation}
and 
\begin{equation}\label{eq:pre_energy estimates_1}
\big\|\nabla u_{\eps,\gamma,\delta}^{\frac {m+1}2}\big\|_{L^2(U_{t_1,t_2},\rn)} \le C_2
\end{equation}
for any $U\Subset\Omega$ and any $0<t_1<t_2<T$ with constants $C_1 = C_1(n,m,\Omega,T,A)$ and $C_2= C_2(n,m,U,t_1,t_2,\Omega,T,A)$, where $A$ is as in \eqref{def:constantA}. Note that $C_1$ and $C_2$ are independent of $\eps$, $\gamma$ and $\delta$. Moreover, by \cite[Thm.\ 1.2]{DiBenedetto_Holder}, the weak solutions $u_{\eps,\gamma,\delta}$ are locally H\"older continuous with an estimate independent of $\eps$. Therefore, we may let $\eps\rightarrow 0$ and, subsequently by monotone convergence, $\gamma \rightarrow 0$ to conclude that $u_{\eps,\gamma,\delta}$ subconverge to a weak solution $u_\delta$ to the penalized porous medium equation in the following sense:
$u_{\eps,\gamma,\delta}\to u_\delta$ a.\,e.\ in $\Omega_T$, $\nabla u_{\eps,\gamma,\delta}^m \rightharpoondown \nabla u_{\delta}^m$ weakly in $L^2(\Omega_T,\rn)$, $u_{\eps,\gamma,\delta}~ \accentset{\scriptstyle \ast}{\rightharpoondown}~ u_{\delta}$ weakly-$\ast$ in $L^\infty(0,T;L^{m+1}(\Omega))$, $\partial_t u_{\eps,\gamma,\delta} \rightharpoondown \partial_t u_{\delta}$ weakly in $L^2(0,T;H^{-1}(\Omega))$, and $\nabla u_{\eps,\gamma,\delta}^\frac{m+1}{2} \rightharpoondown \nabla u_{\delta}^\frac{m+1}{2}$ weakly in $L^2(U_{t_1,t_2},\rn)$. In addition, we have $u_\delta \ge \psi$, and the estimates from \eqref{eq:pre_energy estimates} and \eqref{eq:pre_energy estimates_1} persist in the limit because the upper bounds $C_1$ and $C_2$ are uniform with respect to $\varepsilon$ and $\gamma$. For the details, we refer to \cite[Prop.\ 7.3]{obstacle}.
\end{proof}

Now, we are ready to show the existence of a strong solution to the obstacle problem.

\begin{lemma}\label{strong solution lemma}
Suppose that the conditions \eqref{cond:boundedness}--\eqref{cond:extra_strong} hold. Then, there exists a strong solution $u$ to the obstacle problem \eqref{obstacle inequality}, which satisfies $u^{\frac{m+1}2}\in  L^2_{\textup{loc}}(0,T;H^1_{\textup{loc}}(\Omega))$.
\end{lemma}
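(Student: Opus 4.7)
The plan is to construct the strong solution as a limit, as $\delta\to 0$, of the family $\{u_\delta\}$ of weak solutions to the penalized porous medium equation \eqref{eq:penalizedPME} furnished by Lemma~\ref{exist_weak_penalized}. The two uniform estimates \eqref{eq:energy estimates}--\eqref{eq:energy estimates_1}, crucially including the novel bound on $\nabla u_\delta^{(m+1)/2}$, are the key tools: they provide compactness sufficient to pass to the limit both in the obstacle constraint and in the equation, while ensuring the regularity $u^{(m+1)/2}\in L^2_{\textup{loc}}(0,T;H^1_{\textup{loc}}(\Omega))$ survives the passage.

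First I would extract a (not relabelled) subsequence with $u_\delta\to u$ strongly in $L^{m+1}(\Omega_T)$ and a.e.\ in $\Omega_T$ via an Aubin--Lions type argument based on the bounds on $\nabla u_\delta^m$ and $\partial_t u_\delta$, together with $\nabla u_\delta^m\rightharpoondown \nabla u^m$ weakly in $L^2(\Omega_T,\rn)$, $\partial_t u_\delta\rightharpoondown \partial_t u$ weakly in $L^2(0,T;H^{-1}(\Omega))$, $u_\delta\overset{*}{\rightharpoondown} u$ weakly-$\ast$ in $L^\infty(0,T;L^{m+1}(\Omega))$, and $\nabla u_\delta^{(m+1)/2}\rightharpoondown \nabla u^{(m+1)/2}$ weakly in $L^2(U_{t_1,t_2},\rn)$ for every $U\Subset\Omega$ and $0<t_1<t_2<T$. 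The a.e.\ bound $u_\delta\ge\psi$ from Lemma~\ref{exist_weak_penalized} transfers to $u$, the boundary condition $u^m-g^m\in L^2(0,T;H_0^1(\Omega))$ is preserved by weak convergence, and the initial condition $u(\cdot,0)=u_0$ in the $H^{-1}$-sense follows from the standard trace theorem for functions in $L^2(0,T;H^1)$ with distributional time derivative in $L^2(0,T;H^{-1})$.

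Next I would derive the variational inequality \eqref{var_ineq_strong}. For any comparison map $v$ satisfying \eqref{strong sol assumptions} and any non-negative cut-off $\alpha\in W^{1,\infty}([0,T])$ with $\alpha(T)=0$, the natural energy-class extension of \eqref{weak_penalized_PME} tested with $\alpha(v^m-u_\delta^m)$ gives
\begin{align*}
\int_0^T\langle\partial_t u_\delta,\alpha(v^m-u_\delta^m)\rangle\,dt
&+\iint_{\Omega_T}\alpha\nabla u_\delta^m\cdot\nabla(v^m-u_\delta^m)\,dx\,dt\\
&=\iint_{\Omega_T}\alpha\Psi_+\xi_\delta(\psi^m-u_\delta^m)(v^m-u_\delta^m)\,dx\,dt.
\end{align*}
Since $\xi_\delta(\psi^m-u_\delta^m)$ is supported where $u_\delta^m\le \psi^m+\delta$, and since $v\ge\psi$ forces $v^m-u_\delta^m\ge-\delta$ there, the right-hand side is bounded below by $-\delta\|\alpha\Psi_+\|_{L^1(\Omega_T)}$ and thus vanishes in the limit. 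On the left-hand side, the bilinear piece $\iint\alpha\nabla u_\delta^m\cdot\nabla v^m$ passes by weak convergence, while the quadratic piece satisfies $\liminf_{\delta\to 0}\iint\alpha|\nabla u_\delta^m|^2\ge\iint\alpha|\nabla u^m|^2$ by weak lower semicontinuity, so $\limsup$ of the diffusion contribution is bounded above by its expected limit. The time-derivative term is split as $\int_0^T\langle\partial_t u_\delta,\alpha v^m\rangle\,dt$, which converges via weak-strong pairing, and $\int_0^T\langle\partial_t u_\delta,\alpha u_\delta^m\rangle\,dt$, which I would recast via the chain rule as $-\iint\alpha'\tfrac{1}{m+1}u_\delta^{m+1}\,dx\,dt-\alpha(0)\int_\Omega\tfrac{1}{m+1}u_{0,\varepsilon,\gamma}^{m+1}\,dx$ and pass to the limit by the strong $L^{m+1}$-convergence. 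Reassembling the inequality yields \eqref{var_ineq_strong} for $u$, while the uniform bound \eqref{eq:energy estimates_1} combined with weak lower semicontinuity immediately supplies the desired local $L^2$--$H^1$ regularity of $u^{(m+1)/2}$.

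The step I expect to be most delicate is the rigorous justification of the chain rule for $\int_0^T\langle\partial_t u_\delta,\alpha u_\delta^m\rangle\,dt$, together with the compatible identification of the initial trace, since $\partial_t u_\delta$ is only a distribution in $L^2(0,T;H^{-1}(\Omega))$ and $u_\delta^m-g^m\in L^2(0,T;H_0^1(\Omega))$ only lies in this dual class modulo the fixed boundary values; this is handled by a standard mollification-in-time argument exploiting the averaged formulation \eqref{eq:regularized}, with the $H^{-1}$-initial trace arising as the sharp limit term. The penalty-vanishing analysis and the weak lower semicontinuity in the diffusion term, by contrast, are transparent once the sign of $\alpha$ and the $\delta$-slack above are exploited.
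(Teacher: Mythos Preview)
Your proposal is correct and follows the same overall route as the paper: build $u$ as a limit of the penalized solutions $u_\delta$ from Lemma~\ref{exist_weak_penalized}, use the uniform bounds \eqref{eq:energy estimates}--\eqref{eq:energy estimates_1} for compactness, and pass to the limit in the variational inequality while retaining the $L^2_{\rm loc}$-bound on $\nabla u^{(m+1)/2}$ by weak lower semicontinuity.

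The main methodological difference is in the compactness step. The paper does not invoke Aubin--Lions; instead it appeals to the interior H\"older estimate of \cite[Thm.~1.2]{DiBenedetto_Holder}, which is uniform in $\delta$, to obtain \emph{locally uniform} convergence $u_\delta\to u$. This is both stronger and cheaper here, and it makes the a.e.\ identification of weak limits and the preservation of $u\ge\psi$ immediate. For the variational inequality, the paper defers to \cite[Section~8]{obstacle}, testing \eqref{weak_penalized_PME} with $\varphi=\alpha\eta(v^m-u_\delta^m+\delta\eta_\delta)$, where $\eta$ is a spatial cut-off and $\eta_\delta$ a secondary cut-off; this first yields a \emph{local} strong solution, which is then upgraded to a strong solution in the sense of Definition~\ref{def:strong_sol} via \cite[Lemma~3.5]{obstacle}. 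Your direct argument with $\varphi=\alpha(v^m-u_\delta^m)$ and the observation that the penalty term is $\ge -\delta\|\alpha\Psi_+\|_{L^1}$ is cleaner in spirit and perfectly sound; the paper's route trades this transparency for the ability to cite existing machinery. One small slip: in your chain-rule identity the initial term should involve $u_0^{m+1}$, not $u_{0,\varepsilon,\gamma}^{m+1}$, since the $\varepsilon,\gamma$ limits have already been taken inside Lemma~\ref{exist_weak_penalized}.
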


\begin{proof}

By Lemma \ref{exist_weak_penalized}, there exists a weak solution $u_\delta$ to the penalized porous medium equation. From \cite[Thm.\ 1.2]{DiBenedetto_Holder}, we know that the functions $u_\delta$ are locally H\"older continuous with a quantitative estimate which is uniform in $\delta$. Therefore, there is a function $u$ such that $u_\delta\to u$ locally uniformly in $\Omega_T$ as $\delta\to 0$.  
As the energy estimates in \eqref{eq:energy estimates} and \eqref{eq:energy estimates_1} are independent of $\delta$, we find a (not relabeled) subsequence $u_\delta$ such that $\nabla u_\delta^m \rightharpoondown \nabla u^m$ weakly in $L^2(\Omega_T,\rn)$, $u_\delta~ \accentset{\scriptstyle \ast}{\rightharpoondown}~ u$ weakly-$\ast$ in $L^\infty(0,T;L^{m+1}(\Omega))$, $\partial_t u_\delta \rightharpoondown \partial_t u$ weakly in $L^2(0,T;H^{-1}(\Omega))$, and $\nabla u_\delta^\frac{m+1}{2} \rightharpoondown \nabla u^\frac{m+1}{2}$ weakly in $L^2(U_{t_1,t_2},\rn)$, and, by the lower semicontinuity of the $L^2$-norm, we obtain
\begin{align*}
\big\|\nabla u^m \big\|_{L^2(\Omega_T,\rn)} \le \liminf_{\delta\rightarrow 0 +} \big\|\nabla u_\delta^m\big\|_{L^2(\Omega_T,\rn)} \le C_1 
\end{align*}
and
\begin{align*}
\big\|\nabla u^{\frac{m+1}2} \big\|_{L^2(U_{t_1,t_2},\rn)} \le \liminf_{\delta\rightarrow 0 +} \big\|\nabla u_\delta^{\frac{m+1}2} \big\|_{L^2(U_{t_1,t_2},\rn)} \le C_2
\end{align*}
for any $U\Subset\Omega$ and $0<t_1<t_2<T$ with constants $C_1$ and $C_2$ as in \eqref{eq:energy estimates} and \eqref{eq:energy estimates_1}. The fact that $u$ satisfies the variational inequality \eqref{var_ineq_strong} follows from a calculation with the test function $\ph=\alpha \eta (v^m - u_\delta^m + \delta \eta_\delta)$ in the weak formulation \eqref{weak_penalized_PME} of the penalized porous medium equation. Here, $\alpha$ and $\eta$ are cut-off functions in time and space, respectively, as in the definition of local strong solutions to the obstacle problem (see \cite[Def.\ 2.1]{obstacle}), and $\eta_\delta \in C_0^\infty(\Omega)$ is a cut-off function such that 
\[
\begin{cases}
  \eta_\delta = 1 \quad \text{in } \{x\in \Omega \colon \dist(x,\bd \Omega) \ge \delta\},\\
|\nabla \eta_\delta | \le \frac C \delta.
\end{cases}
\]
This shows that $u$ locally solves the obstacle problem and since it attains the correct boundary and initial values, it is also a strong solution to \eqref{obstacle inequality} by \cite[Lemma 3.5]{obstacle}. For the details, we refer to \cite[Section 8]{obstacle}.
\end{proof}

\section{Gradient estimates for weak solutions}\label{sec:grad}
As $\Psi$ is not bounded in general, strong solutions to the obstacle problem might not exist. Hence, we turn our attention to weak solutions. From now on, we will assume \eqref{cond:boundedness}--\eqref{cond:integrability}, but drop the condition \eqref{cond:extra_strong}. Our aim in this section is to prove the gradient estimate \eqref{grad_estimate_weak} for weak solutions by approximating them by strong solutions. Note that, in order to pass to the limit, the energy estimates from Section~\ref{sec:strong} do not suffice because they depend on $\Psi$. 

We start with the following estimate, which can be found in \cite[Lemma~9.1]{obstacle}.

\begin{lemma}\label{weak sol energy estimate}
  Let $u$ be a weak solution to the obstacle problem \eqref{obstacle inequality}. Then, we have
  \begin{align*}
    \sup_{t\in[0,T]} \int_\Omega u(\cdot, t)^{m+1}\, dx + \iint_{\Omega_T} \Big(u^{2m}+ |\nabla u^m|^2 \Big)\, dx \,dt \le C \widetilde A.
  \end{align*}
Here, $C$ is a constant depending on $n,m,\textup{diam}(\Omega)$ and $T$, and $\widetilde A$ is defined as
\begin{align*}
\widetilde A &= \sup_{t\in[0,T]} \int_{\Omega} g(\cdot,t)^{m+1} \, dx + \int_\Omega u_0^{m+1} \, dx \\&~~+ \iint_{\Omega_T} \Big(g^{2m} + |\nabla g^m|^2 + |\partial_t g^m|^{\frac {m+1}m} \Big) \, dx \, dt.
\end{align*}
\end{lemma}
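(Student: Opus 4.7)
The plan is to test the variational inequality \eqref{ineq-weak} with the boundary datum itself, $v = g$, as a comparison map. Admissibility comes for free: $g \ge \psi$ by \eqref{cond:compatibility}, $g^m - g^m = 0 \in L^2(0,T;H_0^1(\Omega))$, $g(\cdot,0) = u_0$ by \eqref{cond:compatibility}, and $\partial_t g^m \in L^{(m+1)/m}(\Omega_T)$ by \eqref{cond:integrability}. As the time cut-off I take $\alpha_h \in W^{1,\infty}([0,T])$ equal to one on $[0,\tau]$ and decaying linearly to zero on $[\tau, \tau+h]$, for small $h > 0$ and $\tau \in (0,T)$. Expanding $\big\langle\hspace{-.1cm}\big\langle\partial_t u, \alpha_h(g^m - u^m)\big\rangle\hspace{-.1cm}\big\rangle_{u_0}$ by its definition and letting $h \to 0$ (using the continuity $u \in C^0([0,T]; L^{m+1}(\Omega))$ to identify the $\alpha_h'$ contribution at time $\tau$, and dominated convergence elsewhere), the variational inequality yields
\begin{align*}
\tfrac{1}{m+1} \int_\Omega u(\cdot,\tau)^{m+1}\,dx + \int_0^\tau\!\!\int_\Omega |\nabla u^m|^2\,dx\,dt \le \mathrm{RHS}(\tau),
\end{align*}
where $\mathrm{RHS}(\tau)$ collects the spatial cross term $\int_0^\tau\!\!\int_\Omega \nabla u^m \cdot \nabla g^m\,dx\,dt$, the temporal cross term $-\int_0^\tau\!\!\int_\Omega u\,\partial_t g^m\,dx\,dt$, the terminal term $\int_\Omega u(\cdot,\tau) g^m(\cdot,\tau)\,dx$ produced by $\alpha_h' \cdot u g^m$ at $\tau$, and the initial contribution $\int_\Omega [\tfrac{1}{m+1} u_0^{m+1} - u_0 g^m(\cdot,0)]\,dx$.

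Next I would treat each of these by Young's inequality with a small parameter $\epsilon > 0$: the gradient product splits as $\epsilon |\nabla u^m|^2 + C_\epsilon |\nabla g^m|^2$ and is absorbed on the left; the pointwise products $u\cdot g^m$ and $u\cdot|\partial_t g^m|$ split with conjugate exponents $m+1$ and $(m+1)/m$, producing $\epsilon u^{m+1}$ terms together with data contributions $C_\epsilon g^{m+1}$ and $C_\epsilon |\partial_t g^m|^{(m+1)/m}$. Taking $\sup_{\tau \in [0,T]}$ on both sides and choosing $\epsilon$ small, the instantaneous $\epsilon u(\cdot,\tau)^{m+1}$ is absorbed into the left-hand side, while the time-integrated contribution from $u\,\partial_t g^m$, bounded by $\epsilon T \sup_\tau\int_\Omega u^{m+1}\,dx$, is absorbed likewise. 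The remaining data terms sit in $\widetilde A$: the time-integrated $g^{m+1}$ is controlled by $T\sup_t\int g^{m+1}\,dx$ which is part of $\widetilde A$, $|\nabla g^m|^2$ and $|\partial_t g^m|^{(m+1)/m}$ appear in $\widetilde A$ explicitly, and the initial contribution is bounded via Young in terms of $\int u_0^{m+1}\,dx$ and $\int g(\cdot,0)^{m+1}\,dx \le \sup_t \int g^{m+1}\,dx$. This produces
\begin{align*}
\sup_{\tau\in[0,T]} \int_\Omega u(\cdot,\tau)^{m+1}\,dx + \iint_{\Omega_T} |\nabla u^m|^2\,dx\,dt \le C \widetilde A.
\end{align*}

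For the remaining $\iint u^{2m}$ term I would exploit that $u^m - g^m \in L^2(0,T;H_0^1(\Omega))$ by \eqref{weak sol assumptions} and apply the Poincar\'e inequality slicewise on $\Omega$, obtaining $\iint_{\Omega_T}(u^m - g^m)^2\,dx\,dt \le C(\diam\Omega)^2 \iint_{\Omega_T}|\nabla(u^m - g^m)|^2\,dx\,dt$; combined with the elementary bound $u^{2m} \le 2(u^m - g^m)^2 + 2g^{2m}$ and the estimates just obtained, and using that $\iint g^{2m}$ is in $\widetilde A$, this yields $\iint u^{2m}\,dx\,dt \le C\widetilde A$. The main technical subtlety I expect is the limit $h \to 0$ in the bracket: the $\alpha_h'$-contribution is the difference quotient of the continuous map $t \mapsto \int_\Omega[\tfrac{1}{m+1}u^{m+1}(\cdot,t) - u(\cdot,t) g^m(\cdot,t)]\,dx$ across $[\tau,\tau+h]$, which converges at every $\tau$ thanks to the $L^{m+1}$-continuity of $u$ together with the $L^{(m+1)/m}$-continuity of $g^m$ noted after \eqref{cond:integrability}, while the integrability of $u\,\partial_t g^m$ needed for dominated convergence is ensured by Young. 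Once this is in hand, the rest is a routine absorption argument.
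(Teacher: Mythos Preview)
Your argument is correct and is precisely the standard proof: choose $v=g$ in \eqref{ineq-weak}, take the affine-in-time cut-off $\alpha_h$, pass $h\to 0$ using $u\in C^0([0,T];L^{m+1}(\Omega))$ and $g^m\in C^0([0,T];L^{(m+1)/m}(\Omega))$, then absorb via Young and finish the $u^{2m}$ part with Poincar\'e applied to $u^m-g^m\in L^2(0,T;H_0^1(\Omega))$. The paper does not supply its own proof here; it simply cites \cite[Lemma~9.1]{obstacle}, whose argument is exactly the one you outline.
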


Next, in order to control $\nabla u^{\frac {m+1}2}$, we recall that the strong solutions to the obstacle problem constructed in Lemma \ref{strong solution lemma} are also weak supersolutions to the porous medium equation (see \cite[Thm.\ 2.6]{obstacle}). Thus, the energy estimates for weak supersolutions are at our disposal. Let $U\Subset\Omega$ and $0<t_1<t_2<T$. Then, provided that $u\le M$ for some constant $M>0$, we may apply Lemma~\ref{caccioppoli} with a suitable cut-off function $\zeta\in C_0^\infty(\Omega_T)$ to get
\begin{equation} \label{eq:gradient estimate for strong sols}
\begin{aligned}
  &\iint_{U_{t_1,t_2}} \big|\nabla u^{\frac{m+1}2} \big|^2 \, dx \, dt \\
&~~\le C \iint_{\Omega_T} \Big( (M-u)^2 \zeta |\partial_t \zeta| + u^{m-1} (M-u)^2 |\nabla \zeta|^2 \Big) \, dx \, dt \le C,
\end{aligned}
\end{equation}
where $C$ depends on $M, m, |\Omega_T|$ and $\dist(U_{t_1,t_2}, \bd \Omega_T)$. Here, $|\Omega_T|$ denotes the $(n+1)$-dimensional Lebesgue measure of the set $\Omega_T$.

We proceed to prove the existence of weak solutions to the obstacle problem whose gradients $\nabla u^\frac{m+1}{2}$ are locally bounded in $L^2(\Omega_T,\rn)$. For the existence proof, we reproduce the key ideas from \cite[Section 9]{obstacle} whereas our contribution is establishing the gradient estimate.

\begin{lemma} \label{weak_sol_exist_lemma}
 Suppose that the conditions \eqref{cond:boundedness}--\eqref{cond:integrability} hold. Then, there exists a weak solution to the obstacle problem \eqref{obstacle inequality}, which satisfies $u^{\frac{m+1}2}\in  L^2_{\textup{loc}}(0,T;H^1_{\textup{loc}}(\Omega))$. Moreover, we have the estimate 
\begin{align} \label{grad_estimate_weak}
\big\|\nabla u^{\frac{m+1}2} \big\|_{L^2(U_{t_1,t_2},\rn)} \le C
\end{align}
for any $U\Subset\Omega$ and any $0<t_1<t_2<T$. Here, $C$ is a constant depending on $M, m, |\Omega_T|$ and $\dist(U_{t_1,t_2}, \bd \Omega_T)$, where the constant $M>0$ denotes the upper bounds for $\psi,g$ and $u_0$ from \eqref{cond:boundedness}. Finally, u is also a weak supersolution to the porous medium equation in $\Omega_T$, and if the obstacle $\psi$ is additionally H\"older continuous, then, $u$ is a weak solution to the porous medium equation in the set $\{(x,t)\in\Omega_T\colon u(x,t)>\psi(x,t)\}$.
\end{lemma}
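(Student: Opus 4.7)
\emph{Proof plan.} The strategy is to approximate the obstacle $\psi$ by smoother obstacles $\psi_k$ that satisfy the extra condition \eqref{cond:extra_strong}, invoke Lemma~\ref{strong solution lemma} to obtain strong solutions $u_k$ to the corresponding obstacle problems, and pass to the limit $k\to\infty$ using estimates that are uniform in $k$ precisely because they avoid any dependence on $\Psi_k$.

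First I would pick $\psi_k\in C^\infty(\overline{\Omega_T})$ with $\psi\le\psi_k\le M$ uniformly and $\psi_k\to\psi$ in the topologies induced by \eqref{cond:integrability}, so that $\Psi_k:=\bd_t\psi_k-\Delta\psi_k^m\in L^\infty(\Omega_T)$, and simultaneously modify the data to $g_k:=g\vee\psi_k$ and $u_{0,k}:=u_0\vee\psi_k(\cdot,0)$ so that the compatibility conditions \eqref{cond:compatibility} are preserved. Lemma~\ref{strong solution lemma} then produces a strong solution $u_k$ with $u_k^{\frac{m+1}{2}}\in L^2_{\rm loc}(0,T;H^1_{\rm loc}(\Omega))$, which by Remark~\ref{rem:sol_supersol} is a weak supersolution. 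Two uniform bounds are now decisive. Lemma~\ref{weak sol energy estimate} controls $u_k$ in $L^\infty(0,T;L^{m+1}(\Omega))$ and $\nabla u_k^m$ in $L^2(\Omega_T,\rn)$ by a constant depending only on the fixed $g,u_0$; and, since $u_k\le M$ by the comparison principle, the Caccioppoli estimate of Lemma~\ref{caccioppoli} applied to $u_k$ together with \eqref{eq:gradient estimate for strong sols} yields
\[
\big\|\nabla u_k^{\frac{m+1}{2}}\big\|_{L^2(U_{t_1,t_2},\rn)}\le C\big(M,m,|\Omega_T|,\dist(U_{t_1,t_2},\bd\Omega_T)\big)
\]
uniformly in $k$, which is the heart of \eqref{grad_estimate_weak}.

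From these bounds I would extract, along a subsequence, $u_k\to u$ a.e.\ in $\Omega_T$, $\nabla u_k^m\rightharpoondown\nabla u^m$ weakly in $L^2(\Omega_T,\rn)$, and $\nabla u_k^{\frac{m+1}{2}}\rightharpoondown\nabla u^{\frac{m+1}{2}}$ weakly in $L^2(U_{t_1,t_2},\rn)$; the a.e.\ convergence comes from a standard compactness argument based on the uniform energy bounds. Weak lower semicontinuity of the $L^2$-norm then gives \eqref{grad_estimate_weak}. The main obstacle is passing to the limit in the nonlinear diffusion term of the strong variational inequality \eqref{var_ineq_strong}, since only weak convergence of $\nabla u_k^m$ is available; I would resolve this by a Minty--Browder monotonicity trick, starting from
\[
\iint_{\Omega_T}\alpha\,\big|\nabla u_k^m-\nabla w\big|^2\,dx\,dt\ge 0
\]
for arbitrary $w\in L^2(0,T;H^1(\Omega))$, expanding the square, using the variational inequality satisfied by $u_k$ to substitute for $\iint\alpha|\nabla u_k^m|^2$, taking $\liminf_{k\to\infty}$ with the weak convergences already established, and finally specializing $w=\nabla u^m\pm s(\nabla v^m-\nabla u^m)$ and letting $s\downarrow 0$ to recover \eqref{ineq-weak} for an arbitrary admissible $v$.

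The supersolution property of $u$ then follows (as in \cite[Thm.\ 2.7]{obstacle}) by inserting admissible comparison maps with $v^m-u^m=s\varphi$, $\varphi\in C_0^\infty(\Omega_T)$, $\varphi\ge 0$, in \eqref{ineq-weak}, dividing by $s$ and letting $s\downarrow 0$. For the final assertion under Hölder continuity of $\psi$, I would use that the weak supersolution $u$ admits a lower semicontinuous representative by \cite[Thm.\ 1.1]{lower_semicontinuity} and that continuity of $\psi$ makes $\{u>\psi\}$ open. On any $\varphi\in C_0^\infty(\{u>\psi\})$ one has $u\ge\psi+\delta$ on $\supp\varphi$ for some $\delta>0$ by compactness, so both $v^m=u^m\pm s\varphi$ are admissible in \eqref{ineq-weak} for sufficiently small $s>0$; letting $s\downarrow 0$ in both inequalities delivers the two-sided version of \eqref{weak-eq}, i.e., $u$ satisfies the weak formulation of the porous medium equation on $\{u>\psi\}$.
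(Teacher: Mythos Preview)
Your overall strategy for existence and the gradient estimate matches the paper's: approximate $\psi$ by smoother obstacles for which Lemma~\ref{strong solution lemma} applies, obtain uniform-in-$k$ bounds via Lemma~\ref{weak sol energy estimate} and the Caccioppoli estimate~\eqref{eq:gradient estimate for strong sols}, and pass to the limit. The paper likewise refers to \cite[Thm.~2.7]{obstacle} for the passage to the limit in the variational inequality and for the supersolution property; your Minty-type sketch is a plausible alternative description of that step. Your compactness step is vaguer than the paper's (which establishes strong $L^2_{\rm loc}$-convergence of $u_i^m$ through an explicit time-mollification argument combined with Lemma~\ref{sobolev}), but this is a matter of detail rather than a genuine gap.

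There is, however, a real problem with your argument for the final assertion, and this is precisely the point the paper singles out in the introduction as ``closing a gap in the literature''. In Definition~\ref{def:weak_sol} the comparison map $v$ must satisfy $\partial_t v^m\in L^{\frac{m+1}{m}}(\Omega_T)$. Your choice $v^m=u^m\pm s\varphi$ therefore requires $\partial_t u^m\in L^{\frac{m+1}{m}}(\Omega_T)$, which is \emph{not} known for weak solutions to the obstacle problem; indeed, the time derivative of $u$ is available only for strong solutions, and condition~\eqref{cond:extra_strong} has been dropped here. Hence $v^m=u^m\pm s\varphi$ is simply not admissible in \eqref{ineq-weak}, and the two-sided inequality you aim for cannot be obtained this way. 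The paper circumvents this as follows: it first invokes \cite[Thm.~1.1]{Holder_reg_Bog_Luk_Sch} to obtain H\"older continuity of $u$ (not merely lower semicontinuity), then on a fixed cylinder $Q\Subset\{u>\psi\}$ it builds the time mollification $\llbracket u^m\rrbracket_h$ with initial value $u^m(\cdot,t_1)$ and uses the comparison function $v^m=\llbracket u^m\rrbracket_h+\tilde\epsilon\varphi$, which \emph{is} admissible because $\partial_t\llbracket u^m\rrbracket_h\in L^{\frac{m+1}{m}}$. The continuity of $u$ is needed to ensure $\llbracket u^m\rrbracket_h\to u^m$ uniformly on $Q$, so that $v\ge\psi$ for small $h$; one then computes the term $\langle\hspace{-.08cm}\langle\partial_t u,\cdot\rangle\hspace{-.08cm}\rangle_{u(\cdot,t_1)}$ carefully using~\eqref{magic_formula_mollification} and lets $h\downarrow 0$. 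This time regularisation of the comparison function is the missing idea in your proposal.
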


\begin{proof}
Let $U\Subset\Omega$ and $0<t_1<t_2<T$ be fixed. We approximate the obstacle $\psi$ by a sequence of uniformly bounded obstacles $(\psi_i)_{i\in\mathbb N}$ satisfying 
\[
\begin{cases}
\partial_t \psi_i - \Delta \psi_i^m \in L^\infty(\Omega_T),\\
\psi_i^m \rightarrow \psi^m ~~\text{strongly in } L^2(0,T; H^1(\Omega)),\\
\partial_t \psi_i^m \rightarrow \partial_t \psi^m ~~\text{strongly in } L^{\frac {m+1}m} (\Omega_T),\\
\psi_i^m(\cdot,0)\rightharpoondown \psi^m(\cdot,0) ~~\text{weakly in } H^1(\Omega). 
\end{cases}
\]
Note that, unlike in \cite{obstacle}, it is not necessary to approximate $g$ and $u_0$ because of our boundedness assumption \eqref{cond:boundedness}. By Lemma \ref{strong solution lemma}, the obstacle problem with $\psi_i$ as an obstacle has a strong solution $u_i\in C^0([0,T]; L^{m+1}(\Omega))$ satisfying $u_i^m \in g^m + L^2(0,T; H_0^1(\Omega))$ and $u_i^\frac{m+1}{2} \in L^2(t_1,t_2;H^1(U))$. We argue that the functions $u_i$ are uniformly bounded with respect to $i$ by a constant that can be determined in terms of $M$. First, since $\psi_i$ is uniformly bounded, we know that in the contact set $\{ u_i=\psi_i \}$, also $u_i$ is uniformly bounded. Moreover, outside the contact set, $u_i$ is a weak solution to the porous medium equation by \cite[Thm.\ 2.6]{obstacle}. Thus, the comparison principle from \cite[Thm.\ 3.1]{avelin_lukkari_comparison} applied to $u_i$ and the upper bound for $\max\{ \psi_i,g,u_0 \}$ yields that $u_i$ is uniformly bounded in $\Omega_T$. Hence, by Lemma~\ref{weak sol energy estimate} and the Caccioppoli estimate \eqref{eq:gradient estimate for strong sols}, $u_i$ satisfies the estimates
\begin{equation} \label{ui_bounds}
\sup_{t\in[0,T]} \int_\Omega u_i(\cdot,t)^{m+1} \, dx + 
\big\| u_i^m\big\|_{L^2(\Omega_T)} +
\big\| \nabla u_i^m\big\|_{L^2(\Omega_T,\rn)}\le C_1
\end{equation}
and 
\begin{equation} \label{ui_bounds_2}
\big\| \nabla u_i^{\frac{m+1}2} \big\|_{L^2(U_{t_1,t_2},\rn)} \le C_2,
\end{equation}
where $C_1 = C_1(n,m,\diam(\Omega),T,\widetilde A)$ and $C_2 = C_2(m,U,t_1,t_2,\Omega_T,M)$ are constants independent of $i$. Thus, there exist (not relabeled) subsequences $u_i^m$ and $u^{\frac{m+1}2}_i$ which are weakly convergent in $L^2_{\textup{loc}}(0,T;H^1_{\textup{loc}}(\Omega))$. In order to identify the limits, we will establish that $u_i^m\rightarrow u^m$ strongly in $L^2_{\textup{loc}}(\Omega_T)$. For that purpose, we introduce the time regularized functions 
\begin{align*}
w_{i,h}^m= \llbracket u_i^m \rrbracket_h -\llbracket \psi_i^m \rrbracket_h + \psi_i^m ~~\text{ and }~~ w_h^m =  \llbracket u^m \rrbracket_h - \llbracket \psi^m \rrbracket_h + \psi^m,
\end{align*}
where we choose $v_0 = u_0^m$ in \eqref{regularization} to define $\llbracket u_i^m \rrbracket_h$, and analogous choices determine the other mollifications. The first step is showing that $u_i\rightarrow u$ in $L^{m+1}(U^\varrho_T)$, where $U^\varrho_T=B(x_0,\varrho)\times (0,T)$ with a ball $B(x_0,\varrho)\Subset \Omega$. By the triangular inequality, we have
\begin{align*}
&\|u_i-u\|_{L^{m+1}(U^\varrho_T)} \\ &~~\le \|u_i-w_{i,h}\|_{L^{m+1}(U^\varrho_T)} +\|w_{i,h}-w_h\|_{L^{m+1}(U^\varrho_T)}+\|w_{h}-u\|_{L^{m+1}(U^\varrho_T)} \\
&~~=\textup{I} + \textup{II} + \textup{III}.
\end{align*}
We estimate the first term by \cite[ineq.\ (9.17)]{obstacle} to get $\textup{I} \le Ch^\frac{1}{m+1}$ for all $i\in\n$ and $h>0$ with a constant $C$ independent of $i$ and $h$. After that, we use the convergence $w_{i,h}\to w_h \text{ in } L^{2m}(U_T^\varrho)$ from \cite[eq.\ (9.13)]{obstacle} to find that $\textup{II} \to 0$ as $i\to\infty$ for any $h>0$. Finally, the properties of the mollification (see \cite[Lemma 2.2]{kinnunen_lindqvist_reg}) guarantee that $\textup{III} \to 0$ as $h\to 0$.

Therefore, we conclude that $u_i\rightarrow u$ strongly in $L^{m+1}(U^\varrho_T)$, and an elementary computation shows that $u_i^m\rightarrow u^m$ strongly in $L^{\frac{m+1}m}(U^\varrho_T).$ Then, by applying Sobolev's inequality from Lemma \ref{sobolev} to $v=u_i^m$ with $p=2$ and $ r = \frac {m+1}m$, we get
\begin{align*}
&\iint_{U^\varrho_T} u_i^q \, dx \, dt \\&~~ \le C \iint_{U^\varrho_T} \left( \Big|\frac{u_i}{\varrho}\Big|^{2m} + |\nabla u_i^m|^2 \right)\, dx \,dt \left( \sup_{t\in[0,T]} \int_{B(x_0,\varrho)} u_i(\cdot,t)^{m+1}\, dx \right)^{2/n}
\end{align*}
with $q= 2 \left(m+\frac{m+1}n\right) >2m$ and a constant $C$ independent of $i$. By the energy estimates \eqref{ui_bounds} for $u_i$, the right-hand side is uniformly bounded with respect to $i$. Consequently, interpolation tells us $u_i^m \rightarrow u^m$ strongly in $L^2_{\textup{loc}}(\Omega_T)$. Now, we can argue as in the proof of \cite[Thm.\ 2.7]{obstacle} to conclude that $u$ is a weak solution to the obstacle problem in $\Omega_T$ and a weak supersolution to the porous medium equation in $\Omega_T$. Since the bound for $\nabla u_i^\frac{m+1}{2}$ from \eqref{ui_bounds_2} is uniform with respect to $i$, it persists in the limit. 

At this point, it remains to prove that $u$ is a weak solution to the porous medium equation in the set $\{z\in\Omega_T:u(z)>\psi(z)\}$ provided that the obstacle $\psi$ is H\"older continuous. To this aim, we first observe that, following the proof of \cite[Thm.\ 2.7]{obstacle}, it turns out that $u$ is also a local weak solution to the obstacle problem in the sense of \cite[Def.\ 2.1]{obstacle}. Since we assumed that $\psi$ is H\"older continuous, we may apply \cite[Thm.\ 1.1]{Holder_reg_Bog_Luk_Sch} to conclude that $u$ is locally H\"older continuous. Moreover, we know that $u$ is a weak solution to the obstacle problem on any subcylinder $Q\subset\Omega_T$. In the following, we consider a cylinder $Q=U\times[t_1,t_2)\Subset \{z\in\Omega_T\colon u(z)>\psi(z)\}$. Our aim is to prove that $u$ is a weak solution to the porous medium equation in $Q$ with initial datum $u_0=u(\cdot,t_1)$. 
Since $u$ and $\psi$ are continuous on $\overline Q$, there exists $\epsilon>0$ such that $u^m\ge \psi^m+2\epsilon$ in $Q$. We construct the mollification in time $\llbracket u^m \rrbracket_h$ according to \eqref{regularization} subordinate to the cylinder $Q$ with initial values $v_0=u^m(\cdot,t_1)$. By similar arguments as in the proof of \cite[Lemma B.2\,(i)]{BDM-3}, we can show that $\llbracket u^m \rrbracket_h\to u^m$ uniformly in $Q$. Therefore, there exists $\tilde h>0$ such that $\|\llbracket u^m \rrbracket_h - u^m\|_{L^\infty(Q)}\le \epsilon$ for any $h\in(0,\tilde h]$ so that $\llbracket u^m \rrbracket_h\ge\psi^m+\epsilon$ in $Q$. We now consider a function $\varphi\in C_0^\infty(Q)$ and prove that \eqref{weak-super} also holds for this test function. Without loss of generality, we may assume that $\inf_Q\varphi<0$ since otherwise there is nothing to prove. In the variational inequality \eqref{ineq-weak} on $Q$, we choose as comparison function 
$$
	v^m
	=
	\llbracket u^m \rrbracket_h + \tilde \epsilon\varphi,
$$
where 
$$
	0
	<
	\tilde \epsilon
	<
	\frac{\epsilon}{-\inf_Q\varphi}.
$$
With this choice, we have that $v\ge\psi$ on $Q$ and hence, $v$ is admissible in \eqref{ineq-weak}. Therefore, we obtain
\begin{align}\label{coinc}
	&\big\langle\hspace{-.1cm}\big\langle \partial_t u, \alpha \big(\llbracket u^m \rrbracket_h -u^m+ \tilde \epsilon\varphi\big)\big\rangle\hspace{-.1cm}\big\rangle_{u(\cdot,t_1)} \nonumber\\
	&\qquad+ 
	\iint_{Q} \alpha \nabla u^m \cdot \nabla \big(\llbracket u^m \rrbracket_h -u^m+ \tilde \epsilon\varphi\big) \, dx \,dt \ge 0
\end{align}
for all non-negative cut-off functions $\alpha \in W^{1,\infty}([t_1,t_2])$ with $\alpha(t_2)=0$. Here, we choose 
$$
	\alpha(t)
	:=
	\begin{cases}
	1 & \mbox{for $t\in [t_1,t_2-\delta]$}, \\
	\frac{t_2-t}{\delta} & \mbox{for $t\in (t_2-\delta,t_2]$}
	\end{cases}
$$
with $\delta\in(0,t_2-t_1)$. Taking into account 
\begin{align*}
	-\iint_Q & \alpha u \partial_t \llbracket u^m \rrbracket_h \, dx \, dt \\
	&=
	-\iint_Q \alpha \llbracket u^m \rrbracket_h^{\frac1m} \partial_t \llbracket u^m \rrbracket_h \, dx \, dt -
	\iint_Q \alpha \Big[u - \llbracket u^m \rrbracket_h^{\frac1m}\Big] \partial_t \llbracket u^m \rrbracket_h \, dx \, dt \\
	&=
	-\tfrac{m}{m+1} \iint_Q \alpha \partial_t \llbracket u^m \rrbracket_h^{\frac{m+1}{m}} \, dx \, dt -
	\tfrac1h \iint_Q \alpha \Big[u - \llbracket u^m \rrbracket_h^{\frac1m}\Big] \big[u^m - \llbracket u^m \rrbracket_h\big]  dx \, dt \\
	&\le
	\tfrac{m}{m+1} \iint_Q \alpha' \llbracket u^m \rrbracket_h^{\frac{m+1}{m}} \, dx \, dt +
	\tfrac{m}{m+1} \int_U u^{m+1}(\cdot,t_1) \, dx,
\end{align*}
where we used \eqref{magic_formula_mollification}, we may compute for the first integral on the left-hand side of \eqref{coinc} that
\begin{align*}
	&\big\langle\hspace{-.1cm}\big\langle \partial_t u, 
	\alpha \big(\llbracket u^m \rrbracket_h -u^m+ \delta\varphi\big)\big\rangle\hspace{-.1cm}\big\rangle_{u(\cdot,t_1)} \\
	&\qquad\le
	\iint_{Q} \alpha'\left( \tfrac 1 {m+1} u^{m+1} + \tfrac{m}{m+1} \llbracket u^m \rrbracket_h^{\frac{m+1}{m}} -
	u\llbracket u^m \rrbracket_h - \tilde\epsilon u\varphi\right) \, dx \, dt  \\
	&\qquad\quad- \tilde\epsilon\iint_Q \alpha u \partial_t \varphi \, dx \, dt.
\end{align*}
Since $\varphi(\cdot,t_1)=0$ and $\llbracket u^m \rrbracket_h\to u^m$ in $L^{\frac{m+1}{m}}(Q)$, the terms on right-hand side converge to 
\begin{align*}
	- \tilde\epsilon\iint_Q u \partial_t \varphi \, dx \, dt
\end{align*}
as $h,\delta\downarrow 0$. Moreover, the second integral on the left-hand side of \eqref{coinc} tends to 
$$
	\tilde\epsilon\iint_{Q} \nabla u^m \cdot \nabla \varphi \, dx \,dt
$$
in the limit $h,\delta\downarrow 0$. Therefore, we have shown that the inequality \eqref{weak-super} holds for each $\varphi\in C_0^\infty(\Omega_T)$ without any assumption on the sign of $\varphi$. This implies that $u$ is a weak solution to the porous medium equation in $Q$. Since $Q$ was an arbitrary cylinder in the set $\{z\in\Omega_T\colon u(z)>\psi(z)\}$, Lemma~\ref{weak_sol_exist_lemma} is proven. 
\end{proof}

\section{Proof of the main results}\label{sec:proof}

In this section, we will prove Theorem \ref{thm:equivalence} and Theorem \ref{equivalence_celebrated}. We begin with the proof of the former one. The approximation by solutions to the obstacle problem follows the ideas of \cite[Thm.\ 1.3]{supersol}, and the novelty in our paper is establishing gradient estimates which ensure that the regularity property \eqref{integrab_cond} is valid.

\begin{proof}[Proof of Theorem \ref{thm:equivalence}]
Since $u$ is locally bounded and lower semicontinuous by assumption, there exists a sequence of locally uniformly bounded functions $\psi_i\in C^\infty(\Omega_T)$ such that
\[
\psi_i < \psi_{i+1} ~~\text{for any $i\in\n$~~~and}~~~ \lim_{i\rightarrow \infty} \psi_i(x,t) = u(x,t) ~~\text{for a.\,e.\ } (x,t)\in \Omega_T. 
\]
Without loss of generality, we may consider sets $V_{\tau_1,\tau_2} \Subset U_{t_1,t_2}\Subset \Omega_T$. Then, Lemma \ref{weak_sol_exist_lemma} ensures that, for each $i$, there exists a weak solution $u_i$ to the obstacle problem for the porous medium equation in $U_{t_1,t_2}$ with obstacle and initial and lateral boundary data $\psi_i$, i.\,e.\ $u_i$ satisfies
\begin{equation*}
\begin{cases}
u_i^m\in \psi_i^m + L^2(t_1,t_2;H^1_0(U)),\\
u_i\ge \psi_i ~~\text{a.\,e.\ in } U_{t_1,t_2}, 
\end{cases}
\end{equation*}
and
\begin{align*}
\big\langle\hspace{-.1cm}\big\langle \partial_t u_i, \alpha (v^m-u_i^m)\big\rangle\hspace{-.1cm}\big\rangle_{\psi_i(\cdot,t_1)} + \iint_{U_{t_1,t_2}} \alpha \nabla u_i^m \cdot \nabla (v^m-u_i^m) \, dx \,dt \ge 0
\end{align*}
for all comparison maps $v\in \psi_i^m + L^2(t_1,t_2;H^1_0(U))$ with $v\ge\psi_i$ a.\,e.\ in $U_{t_1,t_2}$ and $\partial_t v^m \in L^{\frac {m+1}m}(U_{t_1,t_2})$, and for all non-negative cut-off functions $\alpha \in W^{1,\infty}([t_1,t_2])$ with $\alpha(t_2)=0$. By Lemma \ref{weak_sol_exist_lemma}, for each $i$, $u_i$ is a weak sopersolution to the porous medium equation in $U_{t_1,t_2}$ and a weak solution to the porous medium equation in the set $\{(x,t)\in U_{t_1,t_2} \colon u_i(x,t)>\psi_i(x,t)\}$. Furthermore, by Lemma \ref{weak sol energy estimate} and Lemma \ref{weak_sol_exist_lemma}, the gradients satisfy
\begin{align} \label{gradient_estim_propert}
&\big\| \nabla u_i^m\big\|_{L^2(U_{t_1,t_2},\rn)}\le C ~~\text{ and }~~ \big\| \nabla u_i^{\frac{m+1}2} \big\|_{L^2(V_{\tau_1,\tau_2},\rn)} \le C
\end{align} 
with a constant $C$ that is uniform with respect to $i$. On the other hand, by \cite[Thm.\ 1.1]{Holder_reg_Bog_Luk_Sch} and Remark \ref{rem:sol_supersol}, $u_i$ is a locally continuous weak supersolution in $U_{t_1,t_2}.$ In order to conclude that $u_i\rightarrow u$ in $L^2(V_{\tau_1,\tau_2})$, we will show that 
\begin{align*}
u_i\le u_{i+1}\le u ~~\text{a.\,e.\ in } V_{\tau_1,\tau_2} \text{ for any $i$}.
\end{align*}
To this aim, we consider the sets
\[
K_i=\big\{ (x,t)\in \overline{V_{\tau_1,\tau_2}} : u_i(x,t) \ge \psi_{i+1}(x,t) \big\}.
\]
Since $\psi_{i+1}>\psi_i$ and the functions $u_i$ and $\psi_i$ are continuous in $V_{\tau_1,\tau_2}$ for every $i$, the set $K_i$ is compact. If $K_i = \emptyset$, we have $u_i<\psi_{i+1}$ in $\overline{V_{\tau_1,\tau_2}}$, which implies $u_i<u_{i+1}$ and $u_i<u$. Therefore, it remains to consider the case $K_i\ne \emptyset$. Then, the distance 
\[
d=\dist \left(K_i, \big\{ (x,t)\in \overline{V_{\tau_1,\tau_2}} : u_i(x,t)=\psi_i(x,t) \big\} \right)
\]
is positive. As $K_i$ is compact, there exists a finite number $N$ such that $K_i$ can be covered with $N$ dyadic cubes $Q_j$ with diameter $\diam(Q_j)<d/2$. Hence, we may define
\[
Q=\bigcup_{j=1}^N Q_j \subset \big\{ (x,t)\in \overline{V_{\tau_1,\tau_2}} : u_i(x,t)>\psi_i(x,t) \big\}
\]
and conclude that $u_i$ is a weak solution to the porous medium equation in $Q$ because the contact set $\{u_i=\psi_i\}$ does not intersect $Q$. By construction, we have $u_i\le \psi_{i+1}<u$ on $\bd_p Q$, where $\partial_p$ denotes the natural generalization of the parabolic boundary for finite unions of cylinders (see \cite[Section 3]{avelin_lukkari_comparison} for the exact definition). Since the comparison principle from Def.\ \ref{def:supercaloric}\,(3) can be applied to $u_i$ and $u$ in such a set (see the argument in \cite[Rem.\ 3.4]{lehtela_lukkari}), we deduce that $u_i \le u$ in $Q \supset K_i$. In addition, the inequality $u_i<\psi_{i+1}<u$ holds in $\overline{V_{\tau_1,\tau_2}} \setminus K_i$ by the definition of the set $K_i$. As $\eps>0$ was arbitrary, we infer that $u_i\le u$ in $V_{\tau_1,\tau_2}$. 

On the other hand, we have $u_i \le \psi_{i+1}\le u_{i+1}$ on $\bd_p Q$. Since $u_i$ is a weak solution to the porous medium equation and $u_{i+1}$ is a weak supersolution in $Q$, we may use the comparison principle from \cite[Thm.\ 3.1]{avelin_lukkari_comparison} to find that $u_i\le u_{i+1}$ in $Q \supset K_i$. Again, in the set $\overline{V_{\tau_1,\tau_2}} \setminus K_i$, the inequality $u_i < \psi_{i+1} \le u_{i+1}$ holds by the definition of $K_i$. Thus, we obtain that $u_i\le u_{i+1}$ in $V_{\tau_1,\tau_2}$. 

Collecting the facts, we see that
\begin{align*}
&\psi_i \le u_i \le u_{i+1} \le u ~~\text{a.\,e.\ in $V_{\tau_1,\tau_2}$ for any $i$} ~~\text{ and }~~ \psi_i\rightarrow u.
\end{align*}
From this and the dominated convergence theorem, we know $u_i \to u$ in $L^2(V_{\tau_1,\tau_2})$. It remains to show that $u$ is a weak supersolution satisfying \eqref{integrab_cond}. By \eqref{gradient_estim_propert} and weak compactness, we deduce the existence of the gradients $\nabla u^m$ and $\nabla u^\frac{m+1}{2}$ as well as the convergences
\begin{align*}
&\nabla u_i^m \rightharpoondown \nabla u^m ~~\text{weakly in } L^2(U_{t_1,t_2},\mathbb R^n) ~~ \text{and} \\
&\nabla u_i^\frac{m+1}{2} \rightharpoondown \nabla u^\frac{m+1}{2} ~~\text{weakly in } L^2(V_{\tau_1,\tau_2},\mathbb R^n)
\end{align*}
for (not relabeled) subsequences of $\nabla u_i^m$ and $\nabla u_i^\frac{m+1}{2}$. Since $V_{\tau_1,\tau_2}\Subset \Omega_T$ was arbitrary, we conclude that $\nabla u^m\in L^2_{\rm loc}(\Omega,\mathbb R^n)$ and $\nabla u^\frac{m+1}{2}\in L^2_{\rm loc}(\Omega,\mathbb R^n)$. Finally, due to the convergences $u_i \to u$ in $L^2_{\rm loc}(\Omega_T)$ and $\nabla u_i^m \rightharpoondown \nabla u^m$ weakly in $L^2_{\rm loc}(\Omega_T,\mathbb R^n)$, we may pass to the limit $i\to \infty$ in 
\begin{align*}
		\iint_{\Omega_T} \Big(-u_i\partial_t\ph + \nabla u_i^m \cdot \nabla \ph \Big) \, dx \, dt \ge 0	
\end{align*}
and conclude that $u$ satisfies \eqref{weak-super}. Moreover, using the time mollification \eqref{regularization} we can show by an argument similar to the proof of \cite[Lemma~5.2]{obstacle} that $u\in C^0((0,T);L^{m+1}_{\rm loc}(\Omega))$. This ensures that $u$ is a weak supersolution to the porous medium equation and finishes the proof of Theorem \ref{thm:equivalence}.
\end{proof}

As a consequence of Theorem \ref{thm:equivalence}, we may now prove Theorem \ref{equivalence_celebrated}.

\begin{proof}[Proof of Theorem \ref{equivalence_celebrated}]
First, let $u$ satisfy (i). From \cite[Thm.\ 1.2]{DiBenedetto_Holder} and \cite[Thm.\ 1.1]{dahlberg_kenig}, respectively, we know that $u$ has a locally continuous representative. Without loss of generality, we may assume that $u$ itself is locally continuous and, in particular, locally bounded. Furthermore, due to \cite[Thm.\ 6.5]{vazquez}, the comparison principle holds for $u$. Therefore, we conclude that $u$ is $m$-supercaloric, and hence, Theorem \ref{thm:equivalence} ensures $u^\frac{m+1}{2} \in L^2_\textup{loc}(0,T;H^1_\textup{loc}(\Omega))$. Moreover, the assumption $u\in C^0((0,T);L^{m+1}_{\rm loc}(\Omega))$ implies $u\in C^0((0,T);L^{2}_{\rm loc}(\Omega))$ so that assertion (ii) is verified. 

On the other hand, suppose that $u$ satisfies (ii). Since $u\in L^m_{\rm loc}(\Omega_T)$, we may apply \cite[Thm.\ 1.1]{dahlberg_kenig} to find that $u$ is locally bounded. 
In order to show the existence of $\nabla u^m$, we let $k\in\n$ and consider the truncations $u_k = \min\{ u,k \}$. Since the mapping 
$[0,k]\ni s\mapsto s^\frac{2m}{m+1}$ is Lipschitz continuous, we conclude that $u_k^m$ is weakly differentiable and 
$$
	\nabla u_k^m
	=
	\nabla \Big[\big(u_k^\frac{m+1}{2}\big)^\frac{2m}{m+1}\Big]
	=
	\tfrac{2m}{m+1}u_k^\frac{m-1}{2} \nabla u_k^\frac{m+1}{2}
$$
so that 
\begin{align*}
\iint_{U_{t_1,t_2}} u_k^m \partial_{x_i} \ph\,dx\,dt 
&= 
- \tfrac{2m}{m+1} \iint_{U_{t_1,t_2}} u_k^\frac{m-1}{2} \partial_{x_i} \big( u_k^\frac{m+1}{2} \big) \ph\,dx\,dt 
\end{align*} 
for any $i\in\{1,\dots,n\}$, any $U_{t_1,t_2}\Subset\Omega_T$ and any $\ph \in C_0^\infty(U_{t_1,t_2})$. We recall that, by assumption, we have $u^\frac{m+1}{2}\in L^2_\textup{loc}(0,T;H^1_\textup{loc}(\Omega))$. Therefore, we can infer strong convergence of $u_k$ in $L^{m+1}(U_{t_1,t_2})$ as well as weak subconvergence of $\nabla u_k^\frac{m+1}{2}$ in $L^2(U_{t_1,t_2},\rn)$. This allows us to pass to the limit $k\to\infty$ in the above equation concluding that the weak gradient of $u^m$ exists and is given by $\tfrac{2m}{m+1} u^\frac{m-1}{2} \nabla u^\frac{m+1}{2}$. Consequently, the weak formulations from (i) and (ii) coincide. Moreover, due to the local boundedness of $u$, we obtain
\begin{align*}
\iint_{U_{t_1,t_2}} | \nabla u^m|^2 \,dx\,dt &= \big(\tfrac{2m}{m+1}\big)^2 \iint_{U_{t_1,t_2}} u^{m-1} \big| \nabla u^\frac{m+1}{2}\big|^2 \,dx\,dt \\ &\leq C \iint_{U_{t_1,t_2}} \big| \nabla u^\frac{m+1}{2}\big|^2 \,dx\,dt <\infty,
\end{align*}
where the constant $C$ depends on $m$ and the upper bound for $u$ in $U_{t_1,t_2}$. 
This ensures that $\nabla u^m\in L^2(U_{t_1,t_2},\rn)$. Together with the fact that $u$ satisfies the weak formulation of the porous medium equation, this implies that $u\in C^0([t_1,t_2);L^{m+1}_{\rm loc}(U))$; cf. \cite[Lemma 5.2]{obstacle}. 
Since $U_{t_1,t_2}\Subset\Omega_T$ was arbitrary, we deduce that (i) holds, which finishes the proof.
\end{proof} 

\begin{proof}[Alternative proof of Theorem \ref{equivalence_celebrated}, $(i) \Rightarrow (ii)$]
Let $u$ satisfy (i). 
For $\epsilon>0$, we let $g_\epsilon(s):=\max\{\epsilon,s\}=\max\{\epsilon^m,s^m\}^{\frac{1}{m}}$ for any $s\geq 0$. Since the mapping $\mathbb R\ni \sigma\mapsto \max\{\epsilon^m,\sigma\}^{\frac1m}$ is Lipschitz continuous, we know that $g_\epsilon(u)=\max\{\epsilon^m,u^m\}^{\frac1m}$ is weakly differentiable and 
\begin{align*}
	\nabla g_\epsilon(u)
	=
	\tfrac1m \rchi_{\{u>\epsilon\}} u^{1-m} \nabla u^m
	\in L^2_{\rm loc}(\Omega_T,\mathbb R^n)
\end{align*}
so that $g_\epsilon(u)\in L^2_\textup{loc}(0,T;H^1_\textup{loc}(\Omega))$. Moreover, we recall the mollification in time defined in \eqref{regularization}, where, throughout this proof, we choose $v_0=0$ as initial value. Similar to inequality \eqref{ineq:regularized} for weak supersolutions, we can derive the following regularized version of \eqref{weak-eq}:
\begin{equation}\label{eq:regularized2}
\iint_{\Omega_T} \Big( \bd_t \llbracket u \rrbracket_h  \ph + \nabla \llbracket u^m \rrbracket_h \cdot \nabla \ph \Big)\, dx \,dt =  \frac 1 h \int_\Omega u(\cdot,0) \int_0^T \ph e^{-\frac s h } \, ds \, dx
\end{equation}
for any test function $\ph\in L^2(0,T;H_0^1(\Omega))$. 
In \eqref{eq:regularized2}, we insert $\varphi = \zeta^2 g_\epsilon(u)$, where $\zeta\in C_0^\infty(\Omega_T)$ is a smooth cut-off function with $0\le \zeta \le 1$. We first treat the evolutionary integral. Using formula \eqref{magic_formula_mollification} for the time derivative of the mollification, integrating by parts, and then, passing to the limit $h\to 0$, we find
\begin{align*}
\iint_{\Omega_T} & \zeta^2  \bd_t \llbracket u \rrbracket_h \;\! g_\epsilon(u)\,dx\,dt \\
&= 
\iint_{\Omega_T}  \zeta^2  \bd_t \llbracket u \rrbracket_h \;\! g_\epsilon\big(\llbracket u \rrbracket_h\big)\,dx\,dt +
\iint_{\Omega_T}  \zeta^2  \bd_t \llbracket u \rrbracket_h \;\! 
\Big(g_\epsilon(u) - g_\epsilon\big(\llbracket u \rrbracket_h\big) \Big)\,dx\,dt \\
&\geq
\iint_{\Omega_T}  \zeta^2  \;\!
\bd_t \bigg(\int_0^{\llbracket u \rrbracket_h} g_\epsilon(s)\,ds\bigg)\,dx\,dt 
= 
-2\iint_{\Omega_T} \zeta\partial_t\zeta\;\! \int_0^{\llbracket u \rrbracket_h} g_\epsilon(s)\,ds \,dx\,dt \\
&= -2 \iint_{\Omega_T} \zeta\partial_t\zeta\;\! G_\epsilon\big(\llbracket u \rrbracket_h\big) \,dx\,dt
\to -2 \iint_{\Omega_T} \zeta\partial_t\zeta\;\! G_\epsilon(u) \,dx\,dt,
\end{align*}
where 
\begin{equation*}
	G_\epsilon(s)
	:=
	\begin{cases}
		\epsilon s& \mbox{for $0\le s\le\epsilon,$}\\
		\frac12(\epsilon^2+s^2)&\mbox{for $s>\epsilon$.}
	\end{cases}
\end{equation*}
Note that the right-hand side term in \eqref{eq:regularized2} vanishes as $h\to 0$, and, for the diffusion term, we get in the limit $h\to 0$ that 
\begin{align*}
	&\iint_{\Omega_T} \nabla \llbracket u^m \rrbracket_h \cdot 
	\nabla \big(\zeta^2 g_\epsilon(u)\big) \,dx\,dt
	\to
	\iint_{\Omega_T} \nabla u^m \cdot 
	\nabla \big(\zeta^2 g_\epsilon(u)\big) \,dx\,dt \\
	&\qquad=
	2\iint_{\Omega_T}  \zeta\;\! g_\epsilon(u) \nabla u^m \cdot \nabla\zeta\,dx\,dt +
	\iint_{\Omega_T} \zeta^2\;\! \nabla u^m \cdot\nabla g_\epsilon(u) \,dx\,dt .
\end{align*}
For the first integral on the right-hand side, we compute
\begin{align*}
	\bigg|\iint_{\Omega_T}  
	\zeta\;\! g_\epsilon(u) \nabla u^m \cdot \nabla\zeta\,dx\,dt \bigg|
	&\le
	\iint_{\Omega_T} g_\epsilon(u) |\nabla u^m| |\nabla\zeta|\,dx\,dt \\
	&\le
	\iint_{\Omega_T} \big[|\nabla\zeta|^2 g_\epsilon(u)^2 + |\nabla u^m|^2\big] \,dx\,dt,
\end{align*}
while, for the second one, we find
\begin{align*}
	\iint_{\Omega_T} \zeta^2\;\! \nabla u^m \cdot\nabla g_\epsilon(u) \,dx\,dt 
	&=
	\tfrac1m\iint_{\Omega_T\cap \{u>\epsilon\}} \zeta^2\;\! u^{1-m}\nabla u^m \cdot\nabla u^m \,dx\,dt \\
	&=
	\tfrac{4m}{(m+1)^2} \iint_{\Omega_T\cap\{u>\epsilon\}}\! \zeta^2 \big| \nabla u^\frac{m+1}{2} \big|^2 dx\,dt,
\end{align*}
where we have defined $\nabla u^\frac{m+1}{2}:=\frac{m+1}{2m}\rchi_{\{u>0\}}u^\frac{1-m}{2}\nabla u^m$. 
Combining the preceding computations, we see that 
\begin{align*}
	&\iint_{\Omega_T\cap\{u>\epsilon\}}\! \zeta^2 \big| \nabla u^\frac{m+1}{2} \big|^2 dx\,dt \\
	&\qquad\le 
	\tfrac{(m+1)^2}{2m} \iint_{\Omega_T} 
	\Big[|\partial_t\zeta| G_\epsilon(u) + |\nabla\zeta|^2g_\epsilon(u)^2 + |\nabla u^m|^2\Big] \,dx\,dt.
\end{align*}
Note that the right-hand side is bounded uniformly with respect to $\epsilon$ and converges as $\epsilon\downarrow 0$. Therefore, we obtain by Fatou's lemma that 
\begin{align*}
	&\iint_{\Omega_T}\! \zeta^2 \big| \nabla u^\frac{m+1}{2} \big|^2 dx\,dt \\
	&\qquad\le
	\liminf_{\epsilon\downarrow 0}
	\iint_{\Omega_T\cap\{u>\epsilon\}}\! \zeta^2 \big| \nabla u^\frac{m+1}{2} \big|^2 dx\,dt \\
	&\qquad\le 
	\lim_{\epsilon\downarrow 0} \tfrac{(m+1)^2}{2m} \iint_{\Omega_T} 
	\Big[|\partial_t\zeta| G_\epsilon(u) + |\nabla\zeta|^2g_\epsilon(u)^2 + |\nabla u^m|^2\Big] \,dx\,dt \\
	&\qquad=
	\tfrac{(m+1)^2}{2m} \iint_{\Omega_T} 
	\Big[\big( \tfrac 12 |\partial_t\zeta| + |\nabla\zeta|^2\big)u^2 + |\nabla u^m|^2\big] \,dx\,dt.
\end{align*}
This ensures that $\nabla u^\frac{m+1}{2}\in L^2_{\rm loc}(\Omega_T,\mathbb R^n)$. It remains to show that $\nabla u^\frac{m+1}{2}$ is the weak derivative of $u^\frac{m+1}{2}$. To this aim, we consider $\ph \in C_0^\infty(\Omega_T)$ and compute for $i\in\{1,\dots,n\}$ that 
\begin{align*}
\iint_{\Omega_T}  u^\frac{m+1}{2} \partial_{x_i} \ph\,dx\,dt 
&= 
\iint_{\Omega_T}  (u^m)^\frac{m+1}{2m} \partial_{x_i} \ph\,dx\,dt \\
&= 
- \tfrac{m+1}{2m} \iint_{\Omega_T} \rchi_{\{u>0\}}u^\frac{1-m}{2} \partial_{x_i} (u^m) \ph\,dx\,dt \\
&= 
- \iint_{\Omega_T} \nabla u^\frac{m+1}{2}\cdot e_i\, \ph\,dx\,dt ,
\end{align*} 
which proves the assertion that $\nabla u^\frac{m+1}{2}$ is the weak derivative of $u^\frac{m+1}{2}$, and hence $u^\frac{m+1}{2} \in L^2_\textup{loc}(0,T;H^1_\textup{loc}(\Omega))$. Moreover, the assumption $u\in C^0((0,T);L^{m+1}_{\rm loc}(\Omega))$ implies $u\in C^0((0,T);L^{2}_{\rm loc}(\Omega))$. Finally, since $\nabla u^m=\frac{m+1}{2m}u^\frac{m-1}2\nabla u^\frac{m+1}{2}$ by the very definition of $\nabla u^\frac{m+1}{2}$, we can deduce \eqref{weak-eq-it} from \eqref{weak-eq} so that assertion~(ii) is verified. 
\end{proof}


\bibliography{citations}
\bibliographystyle{plain}
\end{document}